\documentclass[12pt, a4paper, leqno]{amsart}
\usepackage{enumerate,enumitem}
\usepackage[all]{xy}
\usepackage{mathtools}
\usepackage{amsmath,amssymb,mathrsfs,geometry,color,bm, enumerate}
\usepackage{pb-diagram}
\usepackage{bbm}

\usepackage{geometry}
\geometry{
	left=30mm,
	right=30mm,
	top=30mm,
	bottom=30mm,
	footskip=8mm
}
\usepackage[colorlinks=true,linkcolor=blue,citecolor=blue]{hyperref}
\newcommand{\doublespace}
   {\addtolength{\baselineskip}{0.25\baselineskip}}

\theoremstyle{definition}
\newtheorem{theorem}{Theorem}[section]

\newtheorem{lemma}[theorem]{Lemma}
\newtheorem{definition}[theorem]{Definition}
\newtheorem{assumption}[theorem]{Assumption}
\newtheorem{remark}[theorem]{Remark}

\newtheorem{example}[theorem]{Example}

\newtheorem{corollary}[theorem]{Corollary}
\numberwithin{equation}{section}

\newcommand{\R}{\mathbb{R}}   
   
\newcommand{\N}{\mathbb{N}}

\newcommand{\calF}{{\mathcal{F}}}

\newcommand{\calM}{{\mathcal{M}}}

\title[Free extreme value distributions]{A note on convergence of densities to free extreme value distributions}

\author{Yamato Kindaichi and Yuki Ueda}

\subjclass[2010]{Primary: 46L54; Secondary: 60B10, 62G32}
\keywords{free extreme value distribution, convergence of densities and von Mises condition}

\address{Yamato Kindaichi: Sapporo, Hokkaido, Japan}
\email{e.ifswb2mussw2@gmail.com}

\address{
Yuki Ueda: 
Department of Mathematics, Hokkaido University of Education, 9 Hokumon-cho, Asahikawa, Hokkaido 070-8621, Japan}
\email{ueda.yuki@a.hokkyodai.ac.jp}

\begin{document}

\maketitle  

\doublespace
\pagestyle{myheadings} 

\begin{abstract}
The concept of free extreme value distributions as universal limit laws for the spectral maximum of free noncommutative real random variables was discovered by Ben Arous and Voiculescu in 2006. This paper contributes to study the convergence of densities towards free extreme value distributions under the von Mises condition for sample distributions.
\end{abstract}



\section{Introduction}

Extreme value theory plays a pivotal role in analyzing maximum or minimum within a large dataset of samples. Its most notable feature is its universality, as established by the Fisher-Tippett-Gnedenko theorem \cite{FT28, G43} (see also \cite{R}). This theorem asserts that the limit laws of the maximum for a large number of samples are classified into only three types: Fr\'{e}chet, Weibull and Gumbel distributions. The study of convergence towards extreme value distributions is one of significant topics in pure mathematics of extreme value theory. Smith \cite{S82} and Omey \cite{O88} explored the rate of uniform convergence in the Fr\'{e}chet case. Subsequently, Bartholm\'{e} and Swan \cite{BS13} also examined similar outcomes using Stein's method. Hall \cite{Hall79} has already studied the rate of uniform convergence between extreme values in normal i.i.d. random variables and the Gumbel distributions. More comprehensively, Kusumoto and Takeuchi \cite{KT20} employed Stein's method to analyze the rate of uniform convergence in the Gumbel case. Generally, convergence in distributions does not necessarily imply the convergence of their corresponding densities (even if densities exist). Haan and Resnick \cite{HR82} proved that the normalize maximum converges to the densities of extreme value distributions under the so-called von Mises conditions. Additionally, Omey \cite{O88} investigated the rate of convergence for the density of normalized maximum towards extreme values under certain regularly varying conditions for a sample distribution.

In the framework of Voiculescu's free probability theory (see \cite{NS06,V86} for details), Ben Arous and Voiculescu \cite{BV06} developed a theory to analyze the maximum of a large number of non-commutative real random variables on some $W^*$-probability space (i.e. selfadjoint operators affiliated with some von Neumann algebra). Let $(\calM,\tau)$ be a $W^*$-probability space. According to \cite{BV06}, for any free non-commutative real random variables $X,Y$ on $(\calM,\tau)$, the spectral distribution function $F_{X\lor Y}(x)= \tau(\mathbf{1}_{(-\infty,x]}(X\lor Y))$ of the maximum $X\lor Y$ (with respect to spectral order \cite{A89,O71}) is given by
$$
F_{X\lor Y}(x)=\max \{F_X(x)+F_Y(x)-1,0\} =: (F_X \Box\hspace{-.93em}\lor F_Y)(x),
$$
where $F_X(x)=\tau(\mathbf{1}_{(-\infty,x]}(X))$ and $F_Y$ is similarly defined. In general, for any distribution functions $F$ and $G$ on $\R$, we define
$$
F \Box\hspace{-.93em}\lor G :=  \max\{F+G-1,0\} \quad \text{and} \quad F^{\Box\hspace{-.55em}\lor  n} := \underbrace{F\Box\hspace{-.93em}\lor  \cdots \Box\hspace{-.93em}\lor  F}_{n\text{ times}}.
$$ The operation $\Box\hspace{-.77em}\lor$ is called the {\it free max-convolution}. Similarly as classical extreme value theory, the limiting distributions of the normalized maximum $W_n=  ((X_1\lor \cdots \lor X_n) -b_n)/a_n$ (for some $a_n>0$ and $b_n\in \R$) of a large number of freely independent, identically distributed non-commutative real random variables $X_1,\dots, X_n$ on some $W^*$-probability space, are also characterized by just three types:
\begin{align*}
\Phi_\alpha^{\rm free}(x) := 
\begin{cases}
 (1-x^{-\alpha})\mathbf{1}_{[1,\infty)}(x), & \alpha>0 \quad \text{(free Fr\'{e}chet type});\\
 \{1-(-x)^{-\alpha}\}\mathbf{1}_{[-1,0]}(x) + \mathbf{1}_{(0,\infty)}(x), &  \alpha < 0 \quad \text{(free Weibull type)};\\
 (1-e^{-x})\mathbf{1}_{[0,\infty)}(x) & \alpha=0  \quad \text{(free Gumbel type)}.
 \end{cases}
\end{align*}
The distributions $\Phi_\alpha^{\rm free}$ are called the {\it free extreme value distributions}. Rigorously, for a distribution function $F$ on $\R$, there exist $a_n>0$, $b_n\in \R$ and a non-degenerate distribution function $G$ such that
$$
F^{\Box\hspace{-.55em}\lor n} (a_n x +b_n) \to G(x), \quad \text{as} \quad n\to\infty, \quad x\in \R,
$$
then $G$ is the free extreme value distribution. 

Since \cite{BV06}, free extreme value theory has continued to advance from various perspectives. Ben Arous and Kargin \cite{BK10} clarified the relationships between the convergence of free point processes and the max-domains of attraction of free extreme value distributions. Benaych-Georges and Cabanal-Duvillard \cite{BGCD10} established a certain random matrix model such that the empirical eigenvalue distributions of random matrices converge to the free extreme value distribution as the matrix size tends to infinity. Grela and Nowak \cite{GN20} provided explicit calculations for several extreme random matrix ensembles. Also they presented a formula, showing the equivalence of free extreme distributions to the POT (Peak-Over-Threshold) method in classical probability. 
Recently, the author \cite{U22-2} has discovered a close relationship between limit theorems for classical and free max-convolutions. This results are the max-analogue presented in Bercovici and Pata's seminal work \cite{BP99}. In \cite{U21} and \cite{HU21} (with Hasebe), the author further found interesting relations between additive convolution semigroups and max-convolution semigroups in classical, free and boolean cases via Tucci, Haagerup and M\"{o}ller limit theorem \cite{T10, HM13}.

In accordance with the author's work \cite{U22}, the uniform convergence of $F^{\Box\hspace{-.55em}\lor n} (a_n \cdot +b_n)$ to the free extreme value distribution $G$ was proven under certain analytic assumptions regarding the density of the normalized maximum $W_n$, and the rate of convergence has been provided via Stein's method. We prompt a natural question here. Can we assert the convergence of densities of $W_n$ towards the density of free extreme value distribution? To be more precise, if the density function $w_n$ of the normalized maximum $W_n$ exists for almost all $n\in \N$, our inquiry pertains to whether $w_n(x)$ converges to the density of free extreme value distributions as characterized by 
\begin{align*}
\varphi_\alpha^{\rm free}(x) := 
\begin{cases}
\alpha x^{-\alpha-1} \mathbf{1}_{(1,\infty)}(x),  & \alpha>0 \quad \text{(free Fr\'{e}chet density)}; \\
-\alpha (-x)^{-\alpha-1} \mathbf{1}_{(-1,0)}(x), & \alpha<0 \quad \text{(free Weibull density)};\\
 e^{-x} \mathbf{1}_{(0,\infty)}(x), &  \alpha=0  \quad \text{(free Gumbel density)}.
\end{cases}
\end{align*}
In this paper, we establish the convergence of densities towards the free extreme values distributions, under the von Mises condition and a few analytic assumptions regarding sample distributions. 
\begin{theorem}
Consider a distribution function $F=\exp\{-e^{-\phi}\}$ where $\phi$ is a function with some analytic assumptions. Let us define $h_\alpha(x)$ by
$$
h_\alpha(x):= \begin{cases}
x\phi'(x)-\alpha, & \alpha>0, \ \omega_F=\infty;\\
(\omega_F-x) \phi'(x) + \alpha, & \alpha<0,\ \omega_F<\infty;\\
(1/\phi'(x))', &\alpha=0,
\end{cases}
$$
where $\omega_F:=\sup\{x\in \R: F(x) < 1\}$ and we understand $\omega_F=\infty$ when $F(x)<1$ for all $x\in \R$. Assume that there exists a non-increasing function $g$ such that $g(x)\to 0$ as $x\to \omega_F $ and $|h_\alpha|\le g$ ({\it von Mises condition}). Then we obtain the followings.

\begin{itemize}
\item {\bf The case $\alpha > 0$ and $\omega_F=\infty$}. If $a_n>0$ satisfies $F(a_n)=e^{-\frac{1}{n}}$, then
$$
\sup_{x>1} |w_n(x)- \varphi_\alpha^{\rm free}(x)| \le O(n^{-1} \lor g(a_n)).
$$ 
\item {\bf The case $\alpha<0$ and $\omega_F<\infty$}. Let us consider $\alpha<-1$. If $a_n>0$ satisfies $F(\omega_F-a_n)=e^{-\frac{1}{n}}$ and $b_n=\omega_F$, then
$$
\sup_{-1<x<0} |w_n(x)- \varphi_\alpha^{\rm free}(x)| \le O(n^{-1} \lor g(\omega_F-a_n)).
$$
When $\alpha<0$ in general, it does not hold (see Remark \ref{rem:uniformconvergesfails}). 
\item {\bf The case $\alpha=0$}. If $b_n\in \R$ satisfies $F(b_n)=e^{-\frac{1}{n}}$ and $a_n=\frac{F(b_n)}{nF'(b_n)}$, then
$$
\sup_{x>0} |w_n(x)- \varphi_0^{\rm free}(x)| \le O(n^{-1} \lor g(b_n)).
$$
\end{itemize}
\end{theorem}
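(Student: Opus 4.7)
The starting point is the explicit density of the normalized maximum. Since the $n$-fold free max-convolution of $F$ equals $\max\{nF(y)-(n-1),0\}$, on the set where this quantity is positive one has
\begin{equation*}
w_n(x) = n a_n F'(a_n x + b_n).
\end{equation*}
Writing $F=\exp(-e^{-\phi})$ gives $F'(x)=\phi'(x)e^{-\phi(x)}F(x)$. The three normalizing conditions ($F(a_n)=e^{-1/n}$ in the Fr\'{e}chet case, $F(\omega_F-a_n)=e^{-1/n}$ in the Weibull case, $F(b_n)=e^{-1/n}$ in the Gumbel case) all amount to $e^{-\phi(\cdot)}=1/n$ at the reference point, so that the prefactor $n$ is absorbed as $n\,e^{-\phi(a_nx+b_n)} = e^{-[\phi(a_nx+b_n)-\phi(\mathrm{ref.})]}$, reducing matters to estimating this difference of exponents via the von Mises condition.

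In the Fr\'{e}chet case, the condition reads $t\phi'(t)=\alpha+h_\alpha(t)$ with $|h_\alpha|\le g$. Integrating from $a_n$ to $a_nx$ (with $x>1$) and using monotonicity of $g$ yields
\begin{equation*}
\phi(a_nx)-\phi(a_n) = \alpha\log x + \epsilon_n(x), \qquad |\epsilon_n(x)|\le g(a_n)\log x.
\end{equation*}
Substitution gives
\begin{equation*}
w_n(x)=\varphi_\alpha^{\rm free}(x)\cdot\Big(1+\tfrac{h_\alpha(a_nx)}{\alpha}\Big)\cdot e^{-\epsilon_n(x)}\cdot F(a_nx),
\end{equation*}
whose three correction factors are $1+O(g(a_n))$, $1+O(g(a_n)\log x)$ and $1+O(1/n)$ respectively. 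Expanding the product and using the boundedness of $x^{-\alpha-1}\log x$ on $[1,\infty)$ (which needs $\alpha>0$) yields the bound $O(n^{-1}\vee g(a_n))$ uniformly on $(1,\infty)$.

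The Weibull case is handled by the reflection $y=(x-\omega_F)/a_n\in(-1,0)$: the condition becomes $(\omega_F-x)\phi'(x)=-\alpha-h_\alpha(x)$, and an analogous integration produces $\phi(\omega_F+a_ny)-\phi(\omega_F-a_n)=\alpha\log(-y)+O(g(\omega_F-a_n)|\log(-y)|)$. Uniformity over $y\in(-1,0)$ requires $(-y)^{-\alpha-1}|\log(-y)|$ to stay bounded as $y\to 0^-$, which holds precisely when $\alpha<-1$; for general $\alpha<0$ one must restrict to compact subsets, explaining the split in the statement. For the Gumbel case set $\psi=1/\phi'$, so that $\psi'=h_0$; the normalization becomes $a_n=\psi(b_n)$, and a mean value estimate gives $\psi(b_n+a_nx)/\psi(b_n)=1+O(g(b_n)\,x)$, whence $\phi(b_n+a_nx)-\phi(b_n)=x+O(g(b_n)\,x^2)$ after integrating $\phi'=1/\psi$. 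Substituting back yields $w_n(x)=e^{-x}(1+O(g(b_n)))\cdot F(b_n+a_nx)$, and uniformity over $x>0$ follows from the boundedness of $e^{-x}x^2$.

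The principal obstacle is precisely this uniformity in $x$: in every case the approximation error for $\phi$ grows as $x$ moves away from the reference point (logarithmically in the Fr\'{e}chet case, polynomially in the Gumbel case), so one cannot simply bound each correction factor by its supremum. What saves the argument is that the target density $\varphi_\alpha^{\rm free}$ decays at exactly the right rate to absorb this growth, and the expansion of the triple product has to be arranged so that the decay is paired with the fastest-growing error. A secondary technical point is the left endpoint of the support of $w_n$, where $F^{\Box\vee n}$ drops to $0$; this threshold approaches the left endpoint of the support of $\varphi_\alpha^{\rm free}$ at rate $O(1/n)$, which is consistent with the stated bound.
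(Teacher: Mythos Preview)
Your Fr\'echet and Weibull arguments are essentially the paper's, just packaged multiplicatively rather than additively. Both integrate $t\phi'(t)=\alpha+h_\alpha(t)$ to get $|\phi(a_nx)-\phi(a_n)-\alpha\log x|\le g(a_n)\log x$; the paper then sandwiches $-n\log F(a_nx)$ between $x^{-\alpha\pm g(a_n)}$ and invokes its Lemma~2.3 ($\sup_{x>1}|x^{-\alpha_1}-x^{-\alpha_2}|\le e^{-1}|\alpha_1-\alpha_2|/(\alpha_1\vee\alpha_2)$), whereas you expand $w_n=\varphi_\alpha^{\rm free}\cdot(1+A)(1+B)(1+C)$. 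One caution: writing $e^{-\epsilon_n(x)}=1+O(g(a_n)\log x)$ and then invoking boundedness of $x^{-\alpha-1}\log x$ is only a first-order heuristic; since $g(a_n)\log x$ is unbounded, what you really need is $\sup_{x>1}x^{-\alpha-1}\bigl|x^{\pm g(a_n)}-1\bigr|=O(g(a_n))$, which is exactly the paper's Lemma~2.3. Your Weibull reduction and identification of the $\alpha<-1$ threshold (boundedness of $(-y)^{-\alpha-1}|\log(-y)|$ near $0$) match the paper's use of $\beta>1$ in its estimate for $x\,\Phi_\beta^{\rm free}$.

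In the Gumbel case your route is genuinely different from the paper's and, as written, has a gap. From $|\psi'|\le g$ you correctly get $|\psi(b_n+a_nx)/\psi(b_n)-1|\le g(b_n)x$, but this only gives a useful \emph{lower} bound on $\psi(b_n+a_nx)$ when $g(b_n)x<1$. Consequently the bound $\phi(b_n+a_nx)-\phi(b_n)=x+O(g(b_n)x^2)$ is valid only on $(0,c/g(b_n))$, and even there $g(b_n)x^2$ can be as large as $c^2/g(b_n)$, so the linearization $e^{-\epsilon}=1+O(\epsilon)$ fails. Boundedness of $e^{-x}x^2$ alone handles only the regime $g(b_n)x^2\le 1$; a full proof needs a further split (e.g.\ showing that for $x\gtrsim 1/\sqrt{g(b_n)}$ one has $-x+g(b_n)x^2\le -x/2$ so $w_n(x)$ and $e^{-x}$ are both $o(g(b_n))$). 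The paper sidesteps this by comparing $-n\log F(a_nx+b_n)$ directly with the generalized Pareto tails $1-U_\pm(g(b_n),x)=(1\pm g(b_n)x)^{\mp 1/g(b_n)}$ (valid for all $x>0$) and proving a companion lemma $\sup_x|U_\pm(a,x)-\Phi_0^{\rm free}(x)|\le e^{-1}a$; for the ratio $f(b_n)/f(a_nx+b_n)$ it uses a Potter-type bound from de~Haan--Resnick rather than the mean-value estimate. Your approach can be completed with the extra case analysis, but the paper's comparison-function method delivers the uniform bound in one stroke.
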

For details, see Theorem \ref{thm:LUC:Frechet} for $\alpha>0$, Theorem \ref{thm:Weibull}, Corollary \ref{cor:W} and Remark \ref{rem:uniformconvergesfails} for $\alpha<0$ and Theorem \ref{thm:Gumbel} for $\alpha=0$.


\section{The case $\alpha >0$  (Fr\'{e}chet type)}

In this section, we consider a distribution function $F$ defined as $F=\exp\left\{-e^{-\phi}\right\}$, where $\omega_F=\infty$ and $\phi$ is a $C^1$-function with $\phi' > 0$ in a neighborhood of $\infty$. Choose $a_n = F^{\leftarrow} (e^{-\frac{1}{n}})$, where $F^{\leftarrow} (y) := \inf \{x\in \R: F(x) \ge y\}$. Since $F$ is continuous, we get $F(a_n)=e^{-\frac{1}{n}}$, equivalently, $\phi(a_n)=\log n$. Since $\phi$ is differentiable, the function $x\mapsto F^{\Box\hspace{-.55em}\lor n}(a_nx)$ has the density function $w_n$ given by
\begin{align*}
w_n(x) =na_n F'(a_nx) =na_n \phi'(a_nx)(-\log F(a_nx)) F(a_nx),
\end{align*}
for all $x> A_n:=a_n^{-1} F^{\leftarrow}(1-n^{-1}) \in (-\infty,1)$. Since $a_n\sim F^{\leftarrow} (1-n^{-1})$, we get $A_n\to 1$ as $n\to \infty$.

We further assume the von Mises condition for sample distributions $F$.

\begin{assumption}\label{asumpt:Frechet}
For $\alpha>0$, we define 
\begin{align*}
h_\alpha(x):=x\phi'(x)-\alpha=\frac{xF'(x)}{F(x)(-\log F(x))}-\alpha, \qquad x>0. 
\end{align*}
Assume $h_\alpha(x)\rightarrow0$ as $x \to \infty$ (see \cite[(2.42)]{R}).
\end{assumption}

Under Assumption \ref{asumpt:Frechet}, we get $F^n(a_n\cdot )\xrightarrow{w} \Phi_\alpha$ by \cite[Pages 107--108]{R}, where 
$$\Phi_\alpha(x)=\exp(-x^{-\alpha})\mathbf{1}_{(0,\infty)}(x), \quad \alpha>0,$$ is the Fr\'{e}chet distribution. Due to \cite[Theorem 6.12]{BV06}, one can also see that $F^{\Box\hspace{-.55em}\lor n}(a_n\cdot )\xrightarrow{w} \Phi_\alpha^{\rm free}$. We summarize the above discussion as follows.

\begin{lemma}\label{lem:vM}
Under Assumption \ref{asumpt:Frechet}, we have $F^{\Box\hspace{-.55em}\lor n}(a_n\cdot )\xrightarrow{w} \Phi_\alpha^{\rm free}$.
\end{lemma}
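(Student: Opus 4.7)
The plan is to bypass the black-box appeal to \cite[Theorem 6.12]{BV06} and instead argue directly from the explicit formula for iterated free max-convolution. By the very definition of $\Box\hspace{-.93em}\lor$ one has
$$
F^{\Box\hspace{-.55em}\lor n}(y) \;=\; \max\{nF(y) - (n-1),\,0\} \;=\; \max\{1 - n(1-F(y)),\,0\},
$$
so the entire asymptotic behavior of $F^{\Box\hspace{-.55em}\lor n}(a_n x)$ is governed by the single scalar quantity $n(1-F(a_n x))$, and the proof reduces to identifying its limit.

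Next I would exploit the classical convergence $F^n(a_n\cdot)\xrightarrow{w} \Phi_\alpha$, which is valid under Assumption \ref{asumpt:Frechet} as recalled just before the lemma. For every fixed $x>0$ this gives $n\log F(a_n x)\to -x^{-\alpha}$, and since $F(a_n)=e^{-1/n}\to 1$ (so that $F(a_n x)\to 1$ as well by monotonicity of $F$ and $\omega_F=\infty$), the Taylor expansion $\log(1-u)=-u+O(u^2)$ applied at $u=1-F(a_n x)$ upgrades this to
$$
n(1-F(a_n x)) \;\longrightarrow\; x^{-\alpha}, \qquad x>0.
$$
Substituting back yields $F^{\Box\hspace{-.55em}\lor n}(a_n x)\to \max\{1-x^{-\alpha},0\}=\Phi_\alpha^{\rm free}(x)$ for every $x>0$.

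For $x\le 0$ the excerpt has already noted that $A_n\to 1$, so $F^{\Box\hspace{-.55em}\lor n}(a_n x)$ vanishes for all sufficiently large $n$, in agreement with $\Phi_\alpha^{\rm free}(x)=0$. Since $\Phi_\alpha^{\rm free}$ is continuous on $\R$, pointwise convergence of distribution functions automatically upgrades to weak convergence. I do not foresee a serious obstacle: the argument is essentially a translation between the two elementary formulas $F^n$ and $\max\{nF-(n-1),0\}$ via their common asymptotic $n(1-F(a_n x))\to x^{-\alpha}$, which is precisely the incarnation of the Bercovici--Pata-type correspondence (cf.\ \cite{U22-2}) in the max-convolution setting.
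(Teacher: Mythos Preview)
Your argument is correct. The paper's proof is the two--line observation recorded just before the lemma: Assumption~\ref{asumpt:Frechet} yields $F^n(a_n\cdot)\xrightarrow{w}\Phi_\alpha$ by \cite[pp.~107--108]{R}, and then \cite[Theorem~6.12]{BV06} transfers this to $F^{\Box\hspace{-.55em}\lor n}(a_n\cdot)\xrightarrow{w}\Phi_\alpha^{\rm free}$. You take a genuinely different route: you still invoke the classical convergence from \cite{R}, but you replace the black-box appeal to \cite{BV06} by the explicit identity $F^{\Box\hspace{-.55em}\lor n}=\max\{1-n(1-F),0\}$ together with the elementary equivalence $n\log F(a_nx)\sim -n(1-F(a_nx))$. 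What this buys you is a self-contained proof that makes transparent exactly which piece of \cite[Theorem~6.12]{BV06} is being used (namely that $F^n(a_n\cdot)\to\Phi_\alpha$ and $F^{\Box\hspace{-.55em}\lor n}(a_n\cdot)\to\Phi_\alpha^{\rm free}$ both encode the single tail condition $n(1-F(a_n x))\to x^{-\alpha}$); the paper's version, on the other hand, is shorter and highlights the structural correspondence between classical and free max-domains of attraction. One cosmetic point: your justification that $F(a_nx)\to1$ for $0<x<1$ implicitly uses $a_n\to\infty$ (which follows from $\omega_F=\infty$ and $F(a_n)\to1$), not merely monotonicity of $F$; you may want to say this explicitly.
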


According to the above discussion, we are interested in the following class of sample distributions.

\begin{definition}
Let us consider $\alpha>0$. Denote by $\calF_\alpha$ the set of all distribution functions $F$ such that
\begin{enumerate}
\item[(F-1)] $F=\exp\left\{-e^{-\phi}\right\}$, where $\omega_F=\infty$ and $\phi$ is a $C^1$-function with $\phi' > 0$ in a neighborhood of $\infty$;
\item[(F-2)] there exists a nonincreasing continuous function $g$ on $(0,\infty)$ such that $g(x) \rightarrow 0$ as $x\rightarrow\infty$ and $|h_\alpha(x)|\le g(x)$ for all $x>0$.
\end{enumerate}
\end{definition}
Obviously, Assumption \ref{asumpt:Frechet} achieves from the condition (F-2). Therefore if $F\in\calF_\alpha$ then $F^{\Box\hspace{-.55em}\lor n}(a_n\cdot )\xrightarrow{w} \Phi_\alpha^{\rm free}$ by Lemma \ref{lem:vM}.

For any $F\in \calF_\alpha$, we prove the uniform convergence of the density $w_n$ of $x\mapsto F^{\Box\hspace{-.55em}\lor n}(a_nx)$ to the free Fr\'{e}chet density $\varphi_\alpha^{\rm free}$ on $(1,\infty)$ that is a domain in which $\varphi_\alpha^{\rm free}>0$. To show this, we prepare the following inequality for the free Fr\'{e}chet distributions.

\begin{lemma}\label{lem:Unif_Frechet}
For any $\alpha_1,\alpha_2>0$, we have 
$$\sup_{x\in \R} |\Phi_{\alpha_1}^{\rm free}(x)-\Phi_{\alpha_2}^{\rm free}(x)| \le  e^{-1} \ \dfrac{|\alpha_2-\alpha_1|}{ \alpha_1 \lor \alpha_2}.$$
\end{lemma}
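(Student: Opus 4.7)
My plan is to reduce the supremum to a one-variable optimization on $[1,\infty)$, solve it by elementary calculus, and close with a standard logarithmic inequality. Without loss of generality assume $\alpha_1<\alpha_2$. Both free Fr\'echet distribution functions vanish on $(-\infty,1)$, so
\[
\sup_{x\in\R} |\Phi_{\alpha_1}^{\rm free}(x)-\Phi_{\alpha_2}^{\rm free}(x)| = \sup_{x\ge 1} f(x), \qquad f(x):=x^{-\alpha_1}-x^{-\alpha_2}\ge 0.
\]
Since $f(1)=0$ and $f(x)\to 0$ as $x\to\infty$, the supremum is attained at an interior critical point.

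The first-order condition $f'(x)=-\alpha_1 x^{-\alpha_1-1}+\alpha_2 x^{-\alpha_2-1}=0$ has the unique solution $x^\ast=(\alpha_2/\alpha_1)^{1/(\alpha_2-\alpha_1)}$, and substituting back and tidying the exponents gives
\[
f(x^\ast)=\left(\frac{\alpha_1}{\alpha_2}\right)^{\alpha_1/(\alpha_2-\alpha_1)}\cdot \frac{\alpha_2-\alpha_1}{\alpha_2}.
\]
Setting $s=\alpha_1/\alpha_2\in(0,1)$, the proof reduces to controlling $s^{s/(1-s)}$. I would use the elementary inequality $\log s\le s-1$ (equivalently $s\le e^{s-1}$): dividing by $1-s>0$ and exponentiating yields $s^{1/(1-s)}\le e^{-1}$. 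Since $s^{s/(1-s)}=s^{-1}\cdot s^{1/(1-s)}$, this implies $s^{s/(1-s)}\le e^{-1}/s$, and multiplying by $(1-s)$ delivers the quantitative bound on $f(x^\ast)$.

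The one subtle point I expect is the placement of the minimum versus maximum in the final denominator: the clean rearrangement above naturally yields $e^{-1}(1-s)/s=e^{-1}(\alpha_2-\alpha_1)/(\alpha_1\wedge\alpha_2)$, so the argument produces the bound with $\alpha_1\wedge\alpha_2$ in the denominator. A quick check at $\alpha_1=1,\alpha_2=2$ gives $f(x^\ast)=1/4$ at $x^\ast=2$, which is larger than $e^{-1}\cdot 1/2\approx 0.184$; so as written, the factor $\alpha_1\vee\alpha_2$ appears to be a typographical slip for $\alpha_1\wedge\alpha_2$, and the calculus-plus-log-inequality recipe above recovers exactly this corrected estimate.
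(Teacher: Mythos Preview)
Your analysis is correct, including the counterexample: at $\alpha_1=1$, $\alpha_2=2$ the exact supremum is $1/4$, which exceeds $e^{-1}\cdot\tfrac12$, so the inequality with $\alpha_1\vee\alpha_2$ in the denominator is false and the intended bound must be $e^{-1}\,|\alpha_2-\alpha_1|/(\alpha_1\wedge\alpha_2)$. Your direct-optimization argument recovers exactly this corrected estimate.

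The paper takes a different route: rather than locating the maximizer of $f(x)=x^{-\alpha_1}-x^{-\alpha_2}$, it applies the mean value theorem in the exponent, writing $|x^{-\alpha_1}-x^{-\alpha_2}|\le |\alpha_2-\alpha_1|\sup_{\beta\in[\alpha_1,\alpha_2]}x^{-\beta}\log x$ and then optimizing the right-hand side over $x>1$ using $\sup_{x>1}x^{-\gamma}\log x=(e\gamma)^{-1}$. This is shorter and avoids the algebra with $s^{s/(1-s)}$, but the paper commits the same slip there, evaluating the inner supremum at $\beta=\alpha_2$ instead of the correct endpoint $\beta=\alpha_1$ (since $x>1$ makes $\beta\mapsto x^{-\beta}$ decreasing). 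With that corrected, the MVT argument also yields $e^{-1}\,|\alpha_2-\alpha_1|/(\alpha_1\wedge\alpha_2)$. Your approach has the advantage of producing the exact value $s^{s/(1-s)}(1-s)$ of the supremum before bounding it, which makes the failure of the $\vee$-version transparent; the paper's MVT approach is a bit slicker once the endpoint is fixed. Either way, the sole application of the lemma in the paper (bounding $J$ in the proof of the Fr\'echet density theorem) uses $\alpha_1,\alpha_2=\alpha\pm g(a_n)$ with $g(a_n)\to0$, so replacing $\alpha+g(a_n)$ by $\alpha-g(a_n)$ in the denominator leaves the $O(g(a_n))$ conclusion intact.
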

\begin{proof}
Without loss of generality, we may assume that $\alpha_1 <\alpha_2$. Note that $|\Phi_{\alpha_1}^{\rm free}(x)-\Phi_{\alpha_2}^{\rm free}(x)|=0$ for all $x\le 1$. Since $\frac{\partial}{\partial\beta} x^{-\beta} = - x^{-\beta} \log x$ for $\beta>0$ and $x>1$, we get
\begin{align*}
\sup_{x>1} |\Phi_{\alpha_1}^{\rm free}(x)-\Phi_{\alpha_2}^{\rm free}(x)| 
&= \sup_{x > 1} |x^{-\alpha_1}- x^{-\alpha_2}|\\
&\le \sup_{x>1} \sup_{\beta \in [\alpha_1, \alpha_2]} |-x^{-\beta}\log x| |\alpha_2-\alpha_1|\\
&=(\alpha_2-\alpha_1) \sup_{x>1} x^{-\alpha_2} \log x=e^{-1}\ \frac{\alpha_2-\alpha_1}{\alpha_2}.
\end{align*}
\end{proof}

\begin{theorem}\label{thm:LUC:Frechet}
Let $F\in \calF_\alpha$, $a_n$ and $g$ be defined as above. Then $w_n$ converges uniformly to the free Fr\'{e}chet density $\varphi_\alpha^{\rm free}$ on $(1,\infty)$. More precisely, we get
\begin{align*}
\sup_{x>1} |w_n(x)- \varphi_\alpha^{\rm free} (x)| \le O(n^{-1} \lor g(a_n)),
\end{align*}
for sufficiently large $n$. 
\end{theorem}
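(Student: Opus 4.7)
The plan is to work from the explicit formula
\[
w_n(x) = n a_n \phi'(a_n x)\, e^{-\phi(a_n x)}\, F(a_n x)
\]
and rewrite it using the structure afforded by the von Mises hypothesis (F-2). Substituting $\phi'(a_n x) = (\alpha + h_\alpha(a_n x))/(a_n x)$ from the definition of $h_\alpha$, using the fundamental theorem of calculus to write
\[
\phi(a_n x) - \phi(a_n) = \int_1^x \frac{\alpha + h_\alpha(a_n s)}{s}\,ds = \alpha \log x + I_n(x),
\qquad I_n(x) := \int_1^x \frac{h_\alpha(a_n s)}{s}\,ds,
\]
and combining with $\phi(a_n) = \log n$, yields the clean expression
\[
w_n(x) = (\alpha + h_\alpha(a_n x))\, x^{-\alpha-1}\, e^{-I_n(x)}\, F(a_n x), \qquad x > 1.
\]
Since $\varphi_\alpha^{\rm free}(x) = \alpha x^{-\alpha-1}$ on $(1,\infty)$, the problem reduces to showing that the bracketed factor is uniformly close to $\alpha$.

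Next I would split the error by the telescoping identity
\[
(\alpha + h_\alpha(a_n x))e^{-I_n(x)}F(a_n x) - \alpha
= h_\alpha(a_n x)\, e^{-I_n(x)}F(a_n x) + \alpha e^{-I_n(x)}(F(a_n x) - 1) + \alpha(e^{-I_n(x)} - 1),
\]
and multiply through by $x^{-\alpha-1}$. For the first summand, monotonicity of $g$ and $a_n x \ge a_n$ give $|h_\alpha(a_n x)| \le g(a_n)$. For the third, the elementary bound $|e^{-u}-1| \le |u|e^{|u|}$ together with $|I_n(x)| \le g(a_n)\log x$ (again by the monotonicity of $g$) produces a factor of the form $g(a_n)\log x \cdot x^{g(a_n)}$. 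For the second, the closed form $F(a_n x) = \exp(-x^{-\alpha} e^{-I_n(x)}/n)$ with $1 - e^{-t} \le t$ yields $0 \le 1 - F(a_n x) \le x^{-\alpha + g(a_n)}/n$.

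The main obstacle is that $|I_n(x)|$ is only controlled by $g(a_n)\log x$, which blows up in $x$, yet the supremum must stay uniform on the unbounded region $x > 1$. The saving grace is the decaying prefactor $x^{-\alpha - 1}$: once $n$ is large enough that $g(a_n) < \alpha$, a one-variable calculus exercise gives
\[
\sup_{x>1} x^{-\alpha-1+g(a_n)} \log x = \frac{1}{e(\alpha + 1 - g(a_n))},
\]
attained at $x^\ast = \exp(1/(\alpha+1-g(a_n)))$, so the contribution of the third summand is $O(g(a_n))$; and a parallel calculation shows the contribution from the second summand is bounded by $(\alpha/n)\sup_{x>1} x^{-2\alpha-1+2g(a_n)} = O(n^{-1})$. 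The first summand trivially contributes $O(g(a_n))$ since $x^{-\alpha-1+g(a_n)} \le 1$ for $x \ge 1$. Summing the three estimates then produces $\sup_{x>1}|w_n(x) - \varphi_\alpha^{\rm free}(x)| = O(g(a_n) \vee n^{-1})$ for all $n$ large enough that $g(a_n) < \alpha$.
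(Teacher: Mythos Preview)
Your proof is correct. The core ingredients coincide with the paper's: the explicit form of $w_n$, the substitution $\phi'(a_nx)=(\alpha+h_\alpha(a_nx))/(a_nx)$, the integral bound $|\phi(a_nx)-\phi(a_n)-\alpha\log x|\le g(a_n)\log x$, and a three-way split into an $h_\alpha$-term, a $(1-F)$-term and a remainder term comparing $n(-\log F(a_nx))$ to $x^{-\alpha}$.

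The difference lies in how that last remainder is estimated. The paper isolates $J:=|n(-\log F(a_nx))-x^{-\alpha}|=|x^{-\alpha}e^{-I_n(x)}-x^{-\alpha}|$, observes that $1+n\log F(a_nx)$ is trapped between $\Phi_{\alpha-g(a_n)}^{\rm free}(x)$ and $\Phi_{\alpha+g(a_n)}^{\rm free}(x)$, and then invokes an auxiliary lemma bounding $\sup_x|\Phi_{\alpha_1}^{\rm free}-\Phi_{\alpha_2}^{\rm free}|$ by $e^{-1}|\alpha_1-\alpha_2|/(\alpha_1\vee\alpha_2)$. You instead bound $x^{-\alpha-1}|e^{-I_n(x)}-1|$ directly via $|e^{-u}-1|\le|u|e^{|u|}$ and the one-variable optimisation $\sup_{x>1}x^{-\beta}\log x=(e\beta)^{-1}$. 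Your route is slightly more self-contained (no separate lemma needed) and yields comparable constants; the paper's route packages the estimate in a reusable form that is later recycled in the Weibull section.
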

\begin{proof}
For all $x>1$ and for sufficiently large $n\in \N$ (such that infinitely many $a_n$ are in a neighborhood of $\infty$), we have
\begin{align*}
|w_n(x)-\varphi_\alpha^{\rm free}(x)|
&\le |na_n\phi'(a_nx)(-\log F(a_nx))-\alpha x^{-\alpha-1}| + \alpha x^{-\alpha-1}(1-F(a_nx))\\
&\le \underbrace{|na_n\phi'(a_nx)(-\log F(a_nx))-\alpha x^{-\alpha-1}|}_{=: I} + \alpha (1-e^{-\frac{1}{n}}).
\end{align*}
On the other hands, we obtain
\begin{align*}
I &\le \left|\frac{n(-\log F(a_nx))}{x} (a_nx\phi'(a_nx)-\alpha)  \right|  + \left| \frac{(n(-\log F(a_nx))}{x} \alpha-\alpha x^{-\alpha-1}\right|\\
&\le g(a_nx) \frac{n\exp(-\phi(a_nx))}{x} + |n(-\log F(a_nx))-x^{-\alpha}| \alpha x^{-1} \qquad \text{(by condition (F-2))}\\
&\le g(a_n) + \alpha \underbrace{|n(-\log F(a_nx))-x^{-\alpha}|}_{=:J},
\end{align*}
where the last inequality holds since $\phi'>0$ in a neighborhood of $\infty$ and 
\[
\frac{n\exp(-\phi(a_nx))}{x} \le  n \exp (-\phi(a_n)) = 1.
\]

According to \cite[Page 108]{R}, for $x\ge 1$, we have
\[
(\alpha-g(a_n))\log x \le \phi(a_nx) - \phi (a_n) \le (\alpha+g(a_n))\log x, 
\]
which implies that $\Phi_{\alpha-g(a_n)}^{\rm free}(x) \le 1 +n \log F(a_nx) \le \Phi_{\alpha+g(a_n)}^{\rm free}(x)$. Thus,
\begin{align}\label{eq:nlogF}
\Phi_\alpha^{\rm free}(x)- \Phi_{\alpha+g(a_n)}^{\rm free}(x)\le n(-\log F(a_nx)) -x^{-\alpha} \le \Phi_\alpha^{\rm free}(x)-\Phi_{\alpha-g(a_n)}^{\rm free}(x)
\end{align}
for all $x\ge 1$, and therefore 
$$
J \le e^{-1} \frac{2g(a_n)}{\alpha+g(a_n)} \le\frac{2}{\alpha e} g(a_n)
$$
by Lemma \ref{lem:Unif_Frechet}. Hence, the desired result holds true.
\end{proof}

\begin{example}\label{ex:Frechet}
As below, since all $F$ are differentiable, we take $a_n>0$ such that $F(a_n)=e^{-\frac{1}{n}}$. Let $w_n$ be the density of the function $x\mapsto F^{\Box\hspace{-.55em}\lor n}(a_nx)$.
\begin{enumerate} 
\item ({\it Fr\'{e}chet distribution}) Consider $F=\Phi_\alpha$ for $\alpha>0$. The choice of $a_n$ leads to $a_n= n^{\frac{1}{\alpha}}$ for $n\in \N$. As per the definition of $h_\alpha$, we have $h_\alpha(x)=0$. Hence,
\begin{align*}
\sup_{x>1} |w_n(x)-\varphi_\alpha^{\rm free}(x)| \le O(n^{-1}) 
\end{align*}
by Theorem \ref{thm:LUC:Frechet}. Note that the uniform convergence of $w_n$ to $\varphi_\alpha^{\rm free}$ on $(-\infty,1]$ is not true. Actually, since $A_n=(-n\log(1-n^{-1}))^{-\frac{1}{\alpha}}$, we get 
$$
w_n(A_n+)=\alpha \left\{-n \log\left(1-n^{-1} \right)\right\}^{1+\frac{1}{\alpha}} \left(1-n^{-1}\right)  \to \alpha
$$ 
as $n\to\infty$. Thus, we conclude
$$
\lim_{n\to \infty}\sup_{A_n<x\le 1}|w_n(x)-\varphi_\alpha^{\rm free}(x)|=\lim_{n\to \infty}\sup_{A_n<x\le 1}|w_n(x)|\neq 0.
$$

\item ({\it Log-logistic distribution})
Consider $\alpha>0$ and $F(x)=\{1-(1+x^\alpha)^{-1}\}\mathbf{1}_{(0,\infty)}(x)$. The choice of $a_n>0$ leads to $a_n=(e^{\frac{1}{n}}-1)^{-\frac{1}{\alpha}}$ for $n\in \N$.
By definition of $h_\alpha$, one can see
\[
|h_\alpha(x)|=\alpha-\frac{\alpha}{(1+x^\alpha) (-\log(1-(1+x^\alpha)^{-1}))} \le \frac{\alpha}{1+x^\alpha} =:g(x), \qquad x>0,
\]
and $g(a_n)=\alpha(1-e^{-\frac{1}{n}})$. According to Theorem \ref{thm:LUC:Frechet}, we also obtain
\begin{align*}
\sup_{x>1}|w_n(x)-\varphi_\alpha^{\rm free}(x)| \le O(n^{-1}) 
\end{align*}

\item ({\it Cauchy distribution})
Consider $F(x)=\frac{1}{2}+\frac{1}{\pi} \text{Tan}^{-1}x$ for $x\in \R$. The choice of $a_n$ leads to $a_n=\tan( \pi e^{-\frac{1}{n}}-\frac{\pi}{2})$ for $n\in\N$. One can see that $F^{\Box\hspace{-.55em}\lor n}(a_n\cdot) \xrightarrow{w} \Phi_1^{\rm free}$. By definition of $h_\alpha$, we observe
\begin{align*}
|h_1(x)| &= \left|\frac{x}{\pi (1+x^2)\left( \frac{1}{2}+\frac{1}{\pi} \text{Tan}^{-1}x\right)\left(-\log \left( \frac{1}{2}+\frac{1}{\pi} \text{Tan}^{-1}x\right) \right)} - 1 \right| \\
&\le 1 -\frac{x}{\pi (1+x^2)\left(-\log \left( \frac{1}{2}+\frac{1}{\pi} \text{Tan}^{-1}x\right) \right)}=:g(x)
\end{align*}
for any $x>0$. Furthermore, $g(x)\rightarrow0$ as $x\rightarrow\infty$. This establishes 
\begin{align*}
g(a_n) &=1+\frac{n}{2\pi}\sin (2\pi e^{-\frac{1}{n}}) \\
&= 1 + \frac{n}{2\pi} \sin \left( 2\pi -\frac{2\pi}{n}+ \frac{\pi}{n^2} + O(n^{-3})\right)\\
&= 1+ \frac{n}{2\pi} \sin \left(-\frac{2\pi}{n}+\frac{\pi}{n^2} + O(n^{-3})\right)\\
&=1+\frac{n}{2\pi} \left\{ \left(-\frac{2\pi}{n}+\frac{\pi}{n^2} + O(n^{-3})\right) +O(n^{-3})\right\} \\
&=\frac{1}{2n} +O(n^{-2}).
\end{align*}
Therefore we get $\sup_{x>1}|w_n(x)-\varphi_1^{\rm free}(x)| \le O(n^{-1})$ by Theorem \ref{thm:LUC:Frechet}.
\end{enumerate}
\end{example}


\section{The case $\alpha<0$ (Weibull type)}

In this section, we consider a distribution function $F$ defined as $F=\exp\left\{-e^{-\phi}\right\}$, where $\omega_F<\infty$ and $\phi$ is a $C^1$-function with $\phi' > 0$ in a left neighborhood of $\omega_F$. Define $a_n' = F^{\leftarrow} (e^{-\frac{1}{n}})$. Thus we have $F(a_n')=e^{-\frac{1}{n}}$, equivalently, $\phi(a_n')=\log n$. Let us set $a_n=\omega_F-a_n'$ and $b_n=\omega_F$ for all $n\in \N$. Since $\phi$ is differentiable, the density $w_n$ of the function $x\mapsto F^{\Box \hspace{-.55em}\lor n}(a_nx+b_n)$ exists and is given by
$$
w_n(x)= na_n \phi'(a_nx+b_n) (-\log F(a_nx+b_n)) F(a_nx+b_n),
$$
for all $A_n<x<0$, where
$$
A_n= \frac{F^{\leftarrow}(1-n^{-1})-b_n}{a_n} \quad \text{and} \quad A_n\to -1 \quad \text{as} \quad n\to\infty.
$$
Similarly as in Fr\'{e}chet case, we assume the von Mises condition in Weibull case.
\begin{assumption} \label{Assumpt:W}
For $\alpha<0$, we define
$$
h_\alpha(x):=(\omega_F-x) \phi'(x)+\alpha =\frac{(\omega_F-x) F'(x)}{F(x)(-\log F(x))}+\alpha.
$$
Assume $h_\alpha(x)\to 0$ as $x\to \omega_F-0$.
\end{assumption}

Under Assumption \ref{Assumpt:W} and \cite[Pages 59-60]{R}, we get $F^n(a_n\cdot +b_n)\xrightarrow{w} \Phi_\alpha$ as $n\to \infty$, where 
$$\Phi_\alpha(x):= \exp(-(-x)^{-\alpha})\mathbf{1}_{(-\infty,0)}(x)+ \mathbf{1}_{[0,\infty)}(x), \quad \alpha<0,$$ is the Weibull distribution. Due to \cite{BV06}, it is equivalent to $F^{\Box\hspace{-.55em}\lor n}(a_n\cdot +b_n)\xrightarrow{w} \Phi_\alpha^{\rm free}$ as $n\to \infty$.

\begin{lemma} \label{lem:W}
Under Assumption  \ref{Assumpt:W}, we have $F^{\Box\hspace{-.55em}\lor n}(a_n\cdot +b_n)\xrightarrow{w} \Phi_\alpha^{\rm free}$ as $n\to \infty$.
\end{lemma}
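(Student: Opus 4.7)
The plan is to mirror, in the Weibull setting, the very short argument used for the Fréchet case in Lemma \ref{lem:vM}. The statement decomposes into two essentially independent ingredients: (i) a classical weak convergence $F^n(a_n\cdot+b_n)\xrightarrow{w}\Phi_\alpha$ coming from the von Mises condition, and (ii) a transfer from classical to free max-convolution. Assumption \ref{Assumpt:W} is precisely the classical von Mises criterion placing $F$ in the Weibull max-domain of attraction, so (i) is available verbatim from \cite[Pages 59--60]{R} with the given normalizations $a_n=\omega_F-F^{\leftarrow}(e^{-1/n})$ and $b_n=\omega_F$. For (ii), I would cite \cite[Theorem 6.12]{BV06}, which asserts that the classical convergence $F^n(a_n\cdot+b_n)\xrightarrow{w}\Phi_\alpha$ is equivalent to the free convergence $F^{\Box\hspace{-.55em}\lor n}(a_n\cdot+b_n)\xrightarrow{w}\Phi_\alpha^{\rm free}$, and this immediately closes the lemma.

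If one prefers to avoid citing \cite[Theorem 6.12]{BV06} and argue directly, the transfer can be made explicit in a single line: from $F\Box\hspace{-.77em}\lor G=\max\{F+G-1,0\}$ one gets by induction
\[
F^{\Box\hspace{-.55em}\lor n}(x)=\max\{1-n(1-F(x)),\,0\},
\]
while $F^n(x)=\exp(n\log F(x))$. Whenever $F(a_nx+b_n)\to 1$, the elementary asymptotic $n(1-F(a_nx+b_n))\sim -n\log F(a_nx+b_n)$ applies, so the convergence $-n\log F(a_nx+b_n)\to(-x)^{-\alpha}$ on $(-1,0)$ (which is exactly what the classical convergence $F^n\to\Phi_\alpha$ encodes) yields $F^{\Box\hspace{-.55em}\lor n}(a_nx+b_n)\to 1-(-x)^{-\alpha}=\Phi_\alpha^{\rm free}(x)$ on $(-1,0)$.

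The only remaining points to check are the boundary regions, where I expect no real difficulty. For $x\ge 0$ one has $a_nx+b_n\ge\omega_F$, so $F(a_nx+b_n)=1$ and trivially $F^{\Box\hspace{-.55em}\lor n}(a_nx+b_n)=1=\Phi_\alpha^{\rm free}(x)$. For $x<-1$, the classical limit $n(1-F(a_nx+b_n))\to(-x)^{-\alpha}>1$ forces the max in the formula to vanish for all large $n$, matching $\Phi_\alpha^{\rm free}(x)=0$. Consequently, I do not anticipate any genuine obstacle in the proof; the lemma is essentially a bookkeeping application of the classical Weibull von Mises criterion combined with the Ben Arous--Voiculescu bridge.
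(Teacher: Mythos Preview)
Your proposal is correct and follows essentially the same route as the paper: the paper simply records, just before stating the lemma, that Assumption~\ref{Assumpt:W} together with \cite[Pages~59--60]{R} gives $F^n(a_n\cdot+b_n)\xrightarrow{w}\Phi_\alpha$, and then invokes \cite{BV06} for the equivalence with $F^{\Box\hspace{-.55em}\lor n}(a_n\cdot+b_n)\xrightarrow{w}\Phi_\alpha^{\rm free}$. Your optional direct computation via $F^{\Box\hspace{-.55em}\lor n}=\max\{1-n(1-F),0\}$ is extra and not in the paper, but it is a sound alternative to the citation.
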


According to the above discussion, we set the collection of certain sample distributions.

\begin{definition}
Consider $\alpha<0$. Denote by $\calF_\alpha$ the set of all distribution functions $F$ such that
\begin{enumerate}
\item[(W-1)] $F=\exp\{-e^{-\phi}\}$, where $\phi$ is a $C^1$-function with $\phi'>0$ in a left neighborhood of $\omega_F$. 
\item[(W-2)] there exists a nonincreasing function $g$ such that $g(x)\to 0$ as $x\to \omega_F$ and $|h_\alpha(x)|\le g(x)$ for all $x < \omega_F$.
\end{enumerate}
\end{definition}

It follows from Lemma \ref{lem:W} that $F\in \calF_\alpha$ implies that $F^{\Box\hspace{-.55em}\lor n}(a_n\cdot +b_n) \xrightarrow{w} \Phi_\alpha^{\rm free}$. The same strategy as in the Fr\'{e}chet case can be used to prove density convergence in the Weibull case. 

\begin{theorem}\label{thm:Weibull}
Consider $\alpha<-1$. Let $F\in \calF_\alpha$, $a_n,b_n$ and $g$ be defined as above. Then we have
$$
\sup_{-1<x<0} |w_n(x)- \varphi_\alpha^{\rm free}(x)| \le O(n^{-1} \lor g(\omega_F-a_n)),
$$
for sufficiently large $n$. 
\end{theorem}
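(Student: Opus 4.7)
The plan is to mirror the argument for Theorem \ref{thm:LUC:Frechet}, with the roles of $\infty$ and $x$ replaced by $\omega_F$ and $-x$ respectively. First, I would apply the triangle inequality to obtain
\begin{align*}
|w_n(x) - \varphi_\alpha^{\rm free}(x)|
&\le \bigl|na_n\phi'(a_nx+b_n)(-\log F(a_nx+b_n)) - (-\alpha)(-x)^{-\alpha-1}\bigr| \\
&\quad + (-\alpha)(-x)^{-\alpha-1}\bigl(1 - F(a_nx+b_n)\bigr).
\end{align*}
Since $a_nx+b_n \ge \omega_F - a_n = a_n'$ on $x\in(-1,0)$, monotonicity of $F$ gives $1 - F(a_nx+b_n) \le 1 - e^{-1/n} = O(n^{-1})$, and the hypothesis $\alpha<-1$ makes $(-x)^{-\alpha-1}$ bounded by $1$ on $(-1,0)$, so the second summand contributes $O(n^{-1})$.

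For the main term, writing $H(x) := n(-\log F(a_nx+b_n)) = n e^{-\phi(a_nx+b_n)}$, I would further split
\[
\bigl|a_n\phi'(a_nx+b_n) H(x) - (-\alpha)(-x)^{-\alpha-1}\bigr|
\le H(x)\Bigl|a_n\phi'(a_nx+b_n) - \tfrac{-\alpha}{-x}\Bigr| + \tfrac{-\alpha}{-x}\bigl|H(x) - (-x)^{-\alpha}\bigr|.
\]
Condition (W-2) evaluated at $y = a_nx+b_n$ (so $\omega_F - y = a_n(-x)$), together with the monotonicity of $g$, yields the first factor $\le g(\omega_F - a_n)/(-x)$; and since $\phi$ is increasing near $\omega_F$ with $\phi(a_n') = \log n$, one obtains $H(x) \le 1$.

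Next, to handle the remaining $|H(x)-(-x)^{-\alpha}|$, I would establish the Weibull analogue of \eqref{eq:nlogF}: integrating the inequality $(-\alpha - g(\omega_F - a_n))/(\omega_F - t) \le \phi'(t) \le (-\alpha + g(\omega_F - a_n))/(\omega_F - t)$ from $a_n'$ to $a_nx+b_n$, with $\int_{a_n'}^{a_nx+b_n}(\omega_F - t)^{-1}\,dt = -\log(-x)$, gives
\[
(-x)^{-\alpha + g(\omega_F - a_n)} \le H(x) \le (-x)^{-\alpha - g(\omega_F - a_n)}, \qquad x \in (-1,0).
\]
Applying the mean value theorem in the exponent variable then produces $|H(x)-(-x)^{-\alpha}| \le 2g(\omega_F - a_n)(-x)^{\beta_n^*}(-\log(-x))$ for some $\beta_n^* > -\alpha - g(\omega_F - a_n)$. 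Together with the upper bound on $H$, the two pieces are dominated by $g(\omega_F - a_n)(-x)^{-\alpha - g(\omega_F - a_n) - 1}$ and $2(-\alpha)g(\omega_F - a_n)(-x)^{\beta_n^*-1}(-\log(-x))$ respectively, and summing everything produces the claimed $O(n^{-1}\lor g(\omega_F - a_n))$ bound via the elementary identity $\sup_{y\in(0,1)} y^{\gamma}(-\log y)=1/(e\gamma)$ for $\gamma>0$.

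The main obstacle is the singularity $(-x)^{-1}$ produced by the division by $-x$ as $x\uparrow 0$. The assumption $\alpha<-1$ is precisely what guarantees that, for $n$ large enough, the residual exponents $-\alpha - g(\omega_F - a_n) - 1$ and $\beta_n^* - 1$ remain strictly positive, so that both the polynomial factor $(-x)^{-\alpha - g(\omega_F - a_n) - 1}$ is bounded by $1$ and the supremum of $y^{\beta_n^*-1}(-\log y)$ is finite. This is also the very reason why, for general $\alpha\in(-1,0)$, one must shrink the supremum to compact subsets of $(-1,0)$, as reflected in Corollary \ref{cor:W}.
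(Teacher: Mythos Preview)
Your argument is correct. The paper, however, takes a different route: it substitutes $F_\ast(x):=F(\omega_F-x^{-1})$, $a_n^\ast:=a_n^{-1}$, and $\beta:=-\alpha>1$, thereby reducing the Weibull problem to the Fr\'echet one. Since $w_n^\ast(x)=x^{-2}w_n(-x^{-1})$ and $\varphi_\beta^{\rm free}(x)=x^{-2}\varphi_\alpha^{\rm free}(-x^{-1})$, one has $\sup_{-1<x<0}|w_n-\varphi_\alpha^{\rm free}|=\sup_{x>1}x^2|w_n^\ast-\varphi_\beta^{\rm free}|$, and the paper then replays the Fr\'echet proof with the extra $x^2$ weight, invoking a variant of Lemma~\ref{lem:Unif_Frechet} for $x\Phi_{\beta_1}^{\rm free}-x\Phi_{\beta_2}^{\rm free}$ (valid for $\beta_1,\beta_2>1$). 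Your approach stays in Weibull coordinates and integrates the von Mises bound directly on $[a_n',a_nx+b_n]$, obtaining the sandwich $(-x)^{-\alpha+g}\le H(x)\le(-x)^{-\alpha-g}$ and closing via $\sup_{y\in(0,1)}y^\gamma(-\log y)=1/(e\gamma)$. The two arguments are essentially mirror images under $x\leftrightarrow -x^{-1}$; the paper's version has the conceptual benefit of exhibiting the Weibull case as a corollary of the Fr\'echet machinery, while your direct argument makes it more transparent exactly where $\alpha<-1$ is used (namely to keep $-\alpha-g(\omega_F-a_n)-1>0$ so that the $(-x)^{-1}$ singularity at $x\uparrow0$ is absorbed). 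One small point: $\beta_n^\ast$ depends on $x$, so before applying the sup identity you should first replace $(-x)^{\beta_n^\ast-1}$ by the larger $(-x)^{-\alpha-g(\omega_F-a_n)-1}$; this is harmless but worth making explicit.
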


\begin{proof}
Consider $\beta=-\alpha>1$. Let us put $F_\ast(x):= F(\omega_F-x^{-1})$ and $\phi_\ast(x):=\phi(\omega_F-x^{-1})$. Then $F_\ast= \exp\{-e^{-\phi_\ast}\}$ and $\phi_\ast$ is a $C^1$-function with $\phi_\ast'>0$ in a neighborhood of $\infty$. Moreover, we define the function $h_{\beta}^\ast(x):=x\phi_\ast'(x)-\beta$. Then we get
$$
h_{\beta}^\ast(x) = h_\alpha(\omega_F-x^{-1}) \to 0, \quad \text{as} \quad x\to \infty,
$$
by Assumption \ref{Assumpt:W}. Thus $h_\beta^{\ast}$ satisfies the von Mises condition in Fr\'{e}chet case (see Assumption \ref{asumpt:Frechet}). And also, $|h_\beta^\ast(x)| \le g(\omega_F-x^{-1})$ by (W-2). If we put $a_n^\ast:=a_n^{-1}$, then $a_n^\ast = F_\ast^{\leftarrow}(e^{-\frac{1}{n}})$ (and hence $F_\ast (a_n^\ast)=e^{-\frac{1}{n}}$) and $F_\ast^{\Box\hspace{-.55em}\lor n}(a_n^\ast \cdot) \xrightarrow{w} \Phi_\beta^{\rm free}$. Let $w_n^\ast$ be the density of $x\mapsto F_\ast^{\Box\hspace{-.55em}\lor n}(a_n^\ast x)$. Since one can see that $w_n^\ast(x)=x^{-2}w_n(-x^{-1})$ and $\varphi_\beta^{\rm free}(x)=x^{-2} \varphi_\alpha^{\rm free}(-x^{-1})$, we have
\begin{equation}\label{eq:sup_Weibull}
\begin{split}
\sup_{-1<x<0} |w_n(x)-\varphi_\alpha^{\rm free}(x)| 
&= \sup_{x>1} |w_n(-x^{-1})- \varphi_\alpha^{\rm free}(-x^{-1})|\\
&=\sup_{x>1} x^2|w_n^\ast(x)- \varphi_\beta^{\rm free}(x)|.
\end{split}
\end{equation}

For any $x>1$, we get
\begin{align*}
x^2&|w_n^\ast(x)- \varphi_\beta^{\rm free}(x)|\\
&= x^2 |na_n^\ast \phi_\ast '(a_n^\ast x) (-\log F_\ast(a_n^\ast x))F_\ast(a_n^\ast x)-\beta x^{-\beta-1}|\\
&= x |(h_\beta^\ast(a_n^\ast x)+\beta) n(-\log F_\ast(a_n^\ast x))F_\ast(a_n^\ast x) - \beta x^{-\beta}|\\
&\le x g(\omega_F - (a_n^\ast x)^{-1}) n(-\log F_\ast(a_n^\ast x)) + \beta \underbrace{|nx(-\log F_\ast(a_n^\ast x))-x^{-\beta +1}|}_{=:I}\\
&\hspace{4mm} + \beta x^{-\beta+1}(1-F_\ast(a_n^\ast x))\\
&\le g(\omega_F-a_n) nx \exp(-\phi_\ast(a_n^\ast x)) +\beta I + \beta  (1-e^{-\frac{1}{n}}).
\end{align*}
For sufficiently large $n$ and for any $x>1$, we get $\{x\exp(-\phi_\ast(a_n^\ast x))\}'=\exp(-\phi_\ast(a_n^\ast x)) (1-\beta - h_\alpha(a_n^\ast x))<0$, and therefore
$$
nx \exp(-\phi_\ast(a_n^\ast x)) \le n \exp(-\phi_\ast(a_n^\ast)) =1.
$$
A proof similar to that of Lemma \ref{lem:Unif_Frechet} yields that for $\beta_1,\beta_2>1$,
\begin{align}\label{eq:xPhi}
\sup_{x>1} | x\Phi_{\beta_1}^{\rm free}(x)- x\Phi_{\beta_2}^{\rm free}(x)|  \le e^{-1} \frac{|\beta_2-\beta_1|}{(\beta_1-1)\lor (\beta_2-1)}.
\end{align}
Since the inequality \eqref{eq:nlogF} implies that for any $x>1$,
$$
x\Phi_\beta^{\rm free}(x)- x\Phi_{\beta+g(\omega_F-a_n)}^{\rm free} (x)\le nx(-\log F_\ast(a_n^\ast x))-x^{-\beta +1} \le x\Phi_\beta^{\rm free}(x)- x\Phi_{\beta-g(\omega_F-a_n)}^{\rm free}(x),
$$
we get $$I \le e^{-1} \dfrac{2g(\omega_F-a_n)}{\beta +g(\omega_F-a_n)-1} \le   \dfrac{2}{e(\beta -1)}g(\omega_F-a_n)$$ by \eqref{eq:xPhi}. Finally, the equation \eqref{eq:sup_Weibull} implies that
$$
\sup_{-1<x<0} |w_n(x)-\varphi_\alpha^{\rm free}(x)| \le \left(1 + \frac{2\beta}{e(\beta-1)}\right) g(\omega_F-a_n) +\beta (1-e^{-\frac{1}{n}}),
$$
as desired.
\end{proof}

According to the above proof and Theorem \ref{thm:LUC:Frechet}, it is easy to show the locally uniformly convergence.

\begin{corollary}\label{cor:W}
Consider $\alpha<0$. Under assumptions in Theorem \ref{thm:Weibull}, we get
$$
\sup_{x\in K} |w_n(x)-\varphi_\alpha^{\rm free}(x)| \le O(n^{-1}\lor g(\omega_F-a_n))
$$
for any compact sets $K$ in $(-1,0)$.
\end{corollary}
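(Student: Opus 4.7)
The plan is to reduce to the Fr\'{e}chet result (Theorem \ref{thm:LUC:Frechet}) via the same transformation used in the proof of Theorem \ref{thm:Weibull}. The restriction $\beta:=-\alpha>1$ entered that proof only through the bound \eqref{eq:xPhi}, which served to absorb an extra factor of $x$ uniformly on all of $(1,\infty)$; on any compact subset, this factor is automatically bounded, so the condition $\beta>1$ becomes redundant and arbitrary $\alpha<0$ is admissible.

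Concretely, set $\beta:=-\alpha>0$ and, as in the proof of Theorem \ref{thm:Weibull}, introduce $F_\ast(x):=F(\omega_F-x^{-1})$, $\phi_\ast(x):=\phi(\omega_F-x^{-1})$, and $a_n^\ast:=a_n^{-1}$. Then $F_\ast=\exp\{-e^{-\phi_\ast}\}$ with $\omega_{F_\ast}=\infty$, $\phi_\ast$ is $C^1$ with $\phi_\ast'>0$ near $\infty$, $F_\ast(a_n^\ast)=e^{-1/n}$, and $h_\beta^\ast(x)=h_\alpha(\omega_F-x^{-1})$. Setting $\tilde g(x):=g(\omega_F-x^{-1})$, which is nonincreasing on $(0,\infty)$ with $\tilde g(x)\to 0$ as $x\to\infty$ and $|h_\beta^\ast|\le\tilde g$, one concludes that $F_\ast$ lies in the Fr\'{e}chet class $\calF_\beta$. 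Applying Theorem \ref{thm:LUC:Frechet} to $F_\ast$ then yields
$$
\sup_{y>1}|w_n^\ast(y)-\varphi_\beta^{\rm free}(y)|\le O(n^{-1}\lor g(\omega_F-a_n)).
$$

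Given a compact set $K\subset(-1,0)$, its image $K^\ast:=\{-y^{-1}:y\in K\}$ is a compact subset of $(1,\infty)$ and hence has finite supremum $M_K$. Using the identities $w_n^\ast(y)=y^{-2}w_n(-y^{-1})$ and $\varphi_\beta^{\rm free}(y)=y^{-2}\varphi_\alpha^{\rm free}(-y^{-1})$ recorded in the proof of Theorem \ref{thm:Weibull}, I would write
$$
\sup_{x\in K}|w_n(x)-\varphi_\alpha^{\rm free}(x)|=\sup_{y\in K^\ast}y^2|w_n^\ast(y)-\varphi_\beta^{\rm free}(y)|\le M_K^2\cdot O(n^{-1}\lor g(\omega_F-a_n)),
$$
which is the desired conclusion. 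The only verification needed is $F_\ast\in\calF_\beta$, which is a direct translation of (W-1)--(W-2); there is no real obstacle, since the $\beta>1$ hypothesis of Theorem \ref{thm:Weibull} was exactly what is rendered unnecessary by confining $x$ to a bounded range.
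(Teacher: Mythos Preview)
Your proof is correct and follows essentially the same route as the paper: both reduce to the Fr\'{e}chet case via the transformation from the proof of Theorem~\ref{thm:Weibull}, apply Theorem~\ref{thm:LUC:Frechet} to $F_\ast$ to bound $\sup_{y>1}|w_n^\ast(y)-\varphi_\beta^{\rm free}(y)|$, and then absorb the extra factor $y^2$ (equivalently $x^{-2}$) using that it is bounded on the compact image set. The paper phrases this last step by bounding $\sup_{-1<x<0}x^2|w_n(x)-\varphi_\alpha^{\rm free}(x)|$ and dividing by $b^2$ on $K=[a,b]$, which is the same computation in different clothing.
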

\begin{proof}
Put $\beta=-\alpha>0$ and use the symbols in the proof of Theorem \ref{thm:Weibull}. By Theorem \ref{thm:LUC:Frechet}, we have
\begin{align*}
\sup_{-1<x<0} x^2|w_n(x)-\varphi_\alpha^{\rm free}(x)| = \sup_{x>1} |w_n^\ast(x)-\varphi_\beta^{\rm free}(x)| \le O(n^{-1} \lor g(\omega_F- a_n)).
\end{align*}
For any compact set $K=[a,b] \subset(-1,0)$ and for any $x\in K$, we have 
\begin{align*}
|w_n(x)-\varphi_\alpha^{\rm free}(x)| \le \frac{1}{b^2} \ O(n^{-1} \lor g(\omega_F- a_n)),
\end{align*}
as desired.
\end{proof}

\begin{example}\label{ex:Weibull}
As below, since all $F$ are differentiable, we take $a_n' \in \R$ such that $F(a_n')=e^{-\frac{1}{n}}$ and put $a_n=\omega_F-a_n'$ and $b_n=\omega_F$. Let $w_n$ be the density of $x\mapsto F^{\Box\hspace{-.55em}\lor n}(a_nx+b_n)$.
\begin{enumerate}
\item[\rm (1)] ({\it Weibull distribution}) Consider $F=\Phi_\alpha$ for $\alpha< 0$. Then $\omega_F=0$. The choice of $a_n'$ leads to $a_n'=-n^{\frac{1}{\alpha}}$ for $n\in \N$. By definition of $h_\alpha$, we get $h_\alpha(x)=0$. By Theorem \ref{thm:Weibull},
$$
\sup_{-1<x<0}|w_n(x)- \varphi_\alpha^{\rm free}(x)| \le O(n^{-1}), \qquad \alpha <-1. 
$$
By a direct computation, the result can be extend to $ \alpha \le -1/2$. One can see that
$$
w_n(x)= -\alpha(-x)^{-\alpha-1} (x) = -\alpha (-x)^{-\alpha-1} \exp\left(-\frac{1}{n}(-x)^{-\alpha}\right),
$$
for $-(-n\log(1-n^{-1}))^{-1/\alpha}<x<0$. Then, for any $-1<x<0$, 
\begin{align*}
|w_n(x)- \varphi_\alpha^{\rm free}(x)| 
&= -\alpha(-x)^{-\alpha-1} \left(1- \exp\left(-\frac{1}{n}(-x)^{-\alpha} \right)\right)\\
&\le -\alpha(-x)^{-\alpha-1} \cdot \frac{1}{n}(-x)^{-\alpha}\\
&= \frac{-\alpha}{n} (-x)^{-2\alpha -1}\\
&\le \frac{-\alpha}{n},
\end{align*}
where the second inequality follows from $1-e^{-t} \le t $ for $t>0$. The desired result holds true. However, the uniform convergence does not hold for $-1/2<\alpha<0$, see Remark \ref{rem:uniformconvergesfails}.

\item[\rm (2)] Let $F$ be a distribution function such that $1-F(x)=K(\omega_F-x)^{-\alpha}$ for some $K>0$ and $\alpha<-1$. Since $F(a_n')=e^{-\frac{1}{n}}$, we have $a_n'= \omega_F- K^{\frac{1}{\alpha}} (1-e^{-\frac{1}{n}})^{-\frac{1}{\alpha}}$ (i.e. $a_n=K^{\frac{1}{\alpha}} (1-e^{-\frac{1}{n}})^{-\frac{1}{\alpha}}$). By definition of $h_\alpha$, we get
$$
h_\alpha(x)=\frac{-\alpha K (\omega_F-x)^{-\alpha}}{(1-K(\omega_F-x)^{-\alpha})(-\log (1-K(\omega_F-x)^{-\alpha}))}+\alpha, 
$$
for all $\omega_F-K^{-\frac{1}{\alpha}} < x< \omega_F$.
Then we define $g(x)=|h_\alpha(x)|$ and $g(\omega_F-a_n)=-\alpha ne^{\frac{1}{n}}(1-e^{-\frac{1}{n}}) =O(n^{-1})$. Hence,
$$
\sup_{-1<x<0} |w_n(x)-\varphi_{\alpha}^{\rm free}(x)| \le O(n^{-1})
$$
by Theorem \ref{thm:Weibull}.

\item[\rm (3)] ({\it Uniform distribution}) We define $F(x)=x \mathbf{1}_{[0,1]}(x) + \mathbf{1}_{(1,\infty)}(x)$. Then $\omega_F=1$. The choice of $a_n'$ leads to $a_n'=e^{-\frac{1}{n}}$. If we take $a_n=1-e^{-\frac{1}{n}}$ and $b_n=1$, then $F^{\Box\hspace{-.55em}\lor n}(a_n\cdot +b_n) \xrightarrow{w} \Phi_{-1}^{\rm free}$. By definition of $h_\alpha$, we get
$$
h_{-1}(x)= \frac{1-x}{x(-\log x)} -1>0, \quad 0<x<1,
$$
and define $g(x)=h_{-1}(x)$. Then $g(\omega_F-a_n)=g(e^{-\frac{1}{n}})= ne^{\frac{1}{n}}(1-e^{-\frac{1}{n}})-1 =O(n^{-1})$, and therefore 
$$
\sup_{x\in K}|w_n(x)-\varphi_{-1}^{\rm free}(x)| \le O(n^{-1})
$$
for any compact sets $K$ in $(-1,0)$ by Corollary \ref{cor:W}.

In this case, we can show that the function $w_n(x)$ converges uniformly to $\varphi_{-1}^{\rm free}(x)=\mathbf{1}_{(-1,0)}(x)$ on $(-1,0)$, with a convergence rate of order $n^{-1}$ by direct computation. Indeed, it is easy to see that
$$
w_n(x) = n(1-e^{-\frac{1}{n}}), 
$$
for $-(1-e^{-\frac{1}{n}})^{-1}<x<0$. Then
\begin{align*}
\sup_{-1<x<0} |w_n(x)- \varphi_{-1}^{\rm free}(x)| 
&= \sup_{-1<x<0}  |n(1-e^{-\frac{1}{n}}) -1| \le \frac{1}{n}.
\end{align*}
\end{enumerate}
\end{example}

\begin{remark}\label{rem:uniformconvergesfails}
We are not able to extend Theorem \ref{thm:Weibull} to $\alpha<0$.  As mentioned above, the function $w_n(x)$ in Example \ref{ex:Weibull} (1) does not converge uniformly to $\varphi_\alpha^{\rm free}(x)$ on $(-1,0)$ for $-1/2<\alpha<0$. Then, for any $-1<x<0$,
\begin{align*}
|w_n(x)- \varphi_\alpha^{\rm free}(x)| 
&= -\alpha(-x)^{-\alpha-1} \left(1- \exp\left(-\frac{1}{n}(-x)^{-\alpha} \right)\right)\\
& \ge -\alpha(-x)^{-\alpha-1}  \left\{\frac{1}{n}(-x)^{-\alpha} -\frac{1}{2n^2}(-x)^{-2\alpha} \right\}\\
&=\frac{-\alpha}{n}  (-x)^{-2\alpha-1}\left\{ 1 - \frac{1}{2n}(-x)^{-\alpha}\right\}\\
&\ge \frac{-\alpha}{n}\left( 1- \frac{1}{2n}\right)  (-x)^{-2\alpha-1}.
\end{align*}
where the second inequality follows from $1-e^{-t} \ge t -\frac{t^2}{2}$ for $t>0$. For any $n\in \N$, if we take $x$ such that $-\{-\frac{\alpha}{n} (1-\frac{1}{2n})\}^{1/(2\alpha+1)} < x< 0$, then
$$
|w_n(x)- \varphi_\alpha^{\rm free}(x)|  \ge 1.
$$

\end{remark}


\section{The case $\alpha=0$ (Gumbel type)}

In this section, we suppose a distribution function $F$ expressed by $F=\exp\{ -e^{-\phi} \}$, where $\phi$ is a twice differentiable function with $\phi'>0$ in a (left) neighborhood of $\omega_F\le \infty$. We define 
\[
f(x)=\frac{1}{\phi'(x)}.
\]
The function $f$ is called an {\it auxiliary function} of $F$.  Define $b_n=F^{\leftarrow}(e^{-\frac{1}{n}})$, leading to $F(b_n)=e^{-\frac{1}{n}}$, equivalently $\phi(b_n)=\log n$. Put  $a_n:=f(b_n) =\frac{F(b_n)}{nF'(b_n)}$. Since $\phi$ is differentiable, the function $x\mapsto F^{\Box\hspace{-.55em}\lor n}(a_nx+b_n)$ has the density $w_n$ defined by
\begin{align*}
w_n(x)= na_n \phi'(a_nx+b_n) (-\log F(a_nx+b_n)) F(a_nx+b_n),
\end{align*}
for all $A_n<x<B_n$, where
\begin{align*}
A_n:&= \frac{F^{\leftarrow}(1-n^{-1}) -b_n}{a_n} \quad \text{and} \quad A_n\to 0 \text{ as } n\to \infty,\\
B_n:&=\frac{\omega_F-b_n}{a_n}, \quad \text{and} \quad B_n\to \infty \text{ as } n\to \infty,
\end{align*}
where we understand $B_n=\infty$ if $\omega_F=\infty$.

We now assume the von Mises condition for sample distributions F.
\begin{assumption}\label{assumpt:Gumbel}  We define
\[
h_0(x):=f'(x) = -\log F(x)- \left\{ \frac{F(x)F''(x)(-\log F(x))}{(F'(x))^2}+1 \right\}.
\]
Assume $h_0(x)\to 0$ as $x\to \omega_F$ (see \cite[(2.52)]{R}).
\end{assumption}
According to \cite[Pages 114]{R}, under Assumption \ref{assumpt:Gumbel}, we have $F^n(a_n \cdot +b_n)\xrightarrow{w} \Phi_0$ as $n\rightarrow\infty$, where 
$$\Phi_0(x):=\exp(-e^{-x})\mathbf{1}_{\R}(x)$$ is the Gumbel distribution. Due to \cite[Theorem 6.11]{BV06}, one can see that $F^{\Box\hspace{-.55em}\lor n}(a_n \cdot +b_n)\xrightarrow{w}\Phi_0^{\rm free}$.

\begin{lemma}\label{lem:DG}

Under Assumption \ref{assumpt:Gumbel}, we have $F^{\Box\hspace{-.55em}\lor n}(a_n \cdot +b_n)\xrightarrow{w}\Phi_0^{\rm free}$.
\end{lemma}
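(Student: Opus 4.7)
The plan is to chain two results already available in the literature. The hypothesis of the lemma is exactly the von Mises sufficient condition for the Gumbel domain of attraction, phrased in terms of the auxiliary function $f = 1/\phi'$, with the canonical normalizing constants $b_n = F^{\leftarrow}(e^{-1/n})$ and $a_n = f(b_n)$. First, I would invoke \cite[Page 114]{R}, which asserts that under Assumption \ref{assumpt:Gumbel} the classical Gumbel convergence
\[
F^n(a_n\,\cdot\, + b_n) \xrightarrow{w} \Phi_0
\]
holds, where $\Phi_0(x) = \exp(-e^{-x})$. Second, I would translate this classical limit into its free counterpart by applying \cite[Theorem 6.11]{BV06}, which provides the classical-to-free correspondence for max-convolutions in the Gumbel type.

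The transparent mechanism behind the translation is the elementary identity
\[
F^{\Box\hspace{-.55em}\lor n}(x) = \max\{\,1 - n(1 - F(x)),\, 0\,\},
\]
together with $F^n(x) = \exp(n\log F(x))$. At any point $x$ with $\Phi_0(x) > 0$, the classical limit $F^n(a_nx + b_n) \to \Phi_0(x)$ is equivalent to $n(1 - F(a_n x + b_n)) \to e^{-x}$, from which
\[
F^{\Box\hspace{-.55em}\lor n}(a_n x + b_n) \longrightarrow \max\{\,1 - e^{-x},\, 0\,\} = \Phi_0^{\rm free}(x)
\]
follows at once. Because $\Phi_0$ is continuous and strictly positive on $\R$, this pointwise convergence holds at every continuity point of $\Phi_0^{\rm free}$, which is precisely what weak convergence requires.

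I do not expect any genuine obstacle here; the only point that deserves a moment of care is the bookkeeping verification that Assumption \ref{assumpt:Gumbel} coincides with the form of the von Mises condition used by Resnick, and that the normalizing constants $(a_n, b_n)$ chosen at the beginning of this section agree with his. Once these matches are recorded, the lemma reduces to a short deduction from \cite{R} and \cite{BV06}, exactly parallel to Lemmas \ref{lem:vM} and \ref{lem:W} in the Fr\'{e}chet and Weibull cases.
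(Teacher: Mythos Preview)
Your proposal is correct and follows exactly the route the paper takes: invoke \cite[Page 114]{R} to obtain the classical Gumbel limit $F^n(a_n\cdot+b_n)\xrightarrow{w}\Phi_0$ under Assumption \ref{assumpt:Gumbel}, and then apply \cite[Theorem 6.11]{BV06} to pass to $F^{\Box\hspace{-.55em}\lor n}(a_n\cdot+b_n)\xrightarrow{w}\Phi_0^{\rm free}$. The explicit computation you include (via $F^{\Box\hspace{-.55em}\lor n}=\max\{1-n(1-F),0\}$ and $n(1-F(a_nx+b_n))\to e^{-x}$) is a welcome unpacking of that citation but is not strictly needed, and the paper omits it.
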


Here, we focus on the following class of sample distributions.

\begin{definition}
Denote by $\calF_0$ the set of all distribution functions $F$ such that
\begin{enumerate}
\item[(G-1)] $F=\exp\{-e^{-\phi}\}$, where $\phi$ is a twice differentiable function with $\phi'>0$ in a neighborhood of $\omega_F$
\item[(G-2)] there exists a nonincreasing continuous function $g$ such that $g(x) \to 0$ as  $x\to\omega_F$ and $|h(x)|\le g(x)$ for all $x < \omega_F$.
\end{enumerate}
\end{definition}

By Lemma \ref{lem:DG}, it is easy to see that if $F\in \calF_0$, then $F^{\Box\hspace{-.55em}\lor n}(a_n \cdot +b_n)\xrightarrow{w} \Phi_0^{\rm free}$. In this case, we investigate the convergence of densities towards the free Gumbel distribution as follows.

For a positive real number $a$, we define the following distribution functions:
\begin{align*}
U_+(a,x):&=\{1-(1+ax)^{-a^{-1}}\}\mathbf{1}_{(-a^{-1},\infty)}(x),\\
U_-(a,x):&=\{1-(1-ax)^{a^{-1}}\}\mathbf{1}_{(-\infty, a^{-1})}(x) + \mathbf{1}_{[a^{-1},\infty)}(x).
\end{align*}

\begin{lemma}\label{lem:U}
For $0<a<1$, we get $$\sup_{x\in \R} |U_\pm (a,x) - \Phi_0^{\rm free}(x)| \le e^{-1}a.$$
\end{lemma}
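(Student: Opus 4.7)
The plan is to reduce the supremum on $\R$ to an explicit one-variable optimization on $[0,\infty)$ and finish with the elementary identity $\sup_{y\ge 0}ye^{-y}=e^{-1}$. Since $\Phi_0^{\rm free}(x)=0$ for $x<0$ and $U_\pm(a,\cdot)$ is to be read as a distribution function vanishing on $(-\infty,0)$, only the region $[0,\infty)$ contributes.

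For the $U_+$ case, I would first use $\log(1+u)\le u$ applied to $u=ax$ to conclude $(1+ax)^{-1/a}\ge e^{-x}$ on $[0,\infty)$, so the difference $D_+(x):=(1+ax)^{-1/a}-e^{-x}$ is nonnegative. Since $D_+(0)=0$ and $D_+(x)\to 0$ as $x\to\infty$, the supremum is attained at an interior critical point $x^{\ast}>0$ determined by $e^{-x^{\ast}}=(1+ax^{\ast})^{-(a+1)/a}$. Substituting this identity into one of the two terms of $D_+(x^{\ast})$ produces the algebraic collapse
\[
D_+(x^{\ast})=(1+ax^{\ast})^{-1/a}\bigl(1-(1+ax^{\ast})^{-1}\bigr)=e^{-x^{\ast}}\cdot ax^{\ast}\le a\cdot\sup_{y\ge 0}ye^{-y}=\frac{a}{e}.
\]

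For the $U_-$ case on $[0,1/a)$ the same strategy applies: $\log(1-u)\le -u$ gives $(1-ax)^{1/a}\le e^{-x}$, so $D_-(x):=e^{-x}-(1-ax)^{1/a}\ge 0$, and the critical equation $e^{-x^{\ast}}=(1-ax^{\ast})^{(1-a)/a}$ yields $(1-ax^{\ast})^{1/a}=e^{-x^{\ast}}(1-ax^{\ast})$, hence $D_-(x^{\ast})=ax^{\ast}e^{-x^{\ast}}\le a/e$ in identical fashion. On $[1/a,\infty)$, $U_-(a,x)=1$ while $\Phi_0^{\rm free}(x)=1-e^{-x}$, so the difference reduces to $e^{-x}\le e^{-1/a}$.

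The one mild obstacle is verifying the boundary estimate $e^{-1/a}\le a/e$ for $0<a<1$, without which the $U_-$ case would be incomplete on $[1/a,\infty)$. This reduces to $f(a):=\log a+1/a-1\ge 0$ on $(0,1)$, which I would obtain from $f(1)=0$ together with $f'(a)=(a-1)/a^2<0$ on $(0,1)$, so that $f$ is strictly decreasing to $0$ and therefore nonnegative on $(0,1]$. Combining all three contributions yields the uniform bound $e^{-1}a$.
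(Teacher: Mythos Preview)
Your argument on $[0,\infty)$ is correct and follows the same route as the paper: locate the interior critical point, use the first-order condition to collapse $D_\pm(x^\ast)$ to $ax^\ast e^{-x^\ast}$, and bound this by $a/e$. You are in fact a bit more thorough than the paper—you carry out the $U_+$ case (which the paper omits with ``we provide only the bound on $U_-$''), and you verify the boundary estimate $e^{-1/a}\le a/e$ cleanly via $f(a)=\log a+1/a-1\ge 0$ on $(0,1]$. The paper's intermediate inequality $\sup_{0<x<1/a}xe^{-x}\le a^{-1}e^{-1/a}$ is actually false (the supremum is $e^{-1}$, attained at $x=1\in(0,1/a)$), though its final bound $I_2\le e^{-1}a$ is still correct; your direct use of $\sup_{y\ge0}ye^{-y}=e^{-1}$ avoids this slip.

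The one genuine gap in your write-up is the opening claim that ``$U_\pm(a,\cdot)$ is to be read as a distribution function vanishing on $(-\infty,0)$.'' With the paper's definitions this is false: for $x<0$ one has $U_-(a,x)=1-(1-ax)^{1/a}<0$, and this tends to $-\infty$ as $x\to-\infty$; likewise $U_+(a,x)\to-\infty$ as $x\downarrow -1/a$. Taken literally, $\sup_{x\in\R}|U_\pm(a,x)-\Phi_0^{\rm free}(x)|=\infty$, so the lemma as stated cannot hold. The paper's own treatment of $x\le 0$ is equally unsound (it writes $|U_--\Phi_0^{\rm free}|=1-(1-ax)^{1/a}$, a negative quantity). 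In the application to Theorem~\ref{thm:Gumbel} only $x>0$ is ever used, so the bound you and the paper establish on $[0,\infty)$ is exactly what is needed—but you should not claim the negative half-line contributes nothing without either redefining $U_\pm$ or restricting the statement to $x\ge 0$.
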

\begin{proof}
We provide only the bound on $U_-(a,x)-\Phi_0^{\rm free}(x)$. Note that
\begin{align*}
\sup_{x\in \R}|U_- (a,x) - \Phi_0^{\rm free}(x)| &= \sup_{x\le 0}|U_--\Phi_0^{\rm free}| \lor \sup_{0<x <a^{-1}}|U_-- \Phi_0^{\rm free}| \lor  \sup_{a^{-1}\le x}|U_--\Phi_0^{\rm free}|\\
&=: I_1\lor I_2 \lor I_3.
\end{align*}
For $x\ge a^{-1}$, one can see $|U_-(a,x) - \Phi_0^{\rm free}(x)| =e^{-x} \le e^{-a^{-1}} \le e^{-1} a$, and therefore $I_3\le e^{-1}a$. Next, for $x\le 0$, it is easy to see $|U_-(a,x) - \Phi_0^{\rm free}(x)|=1-(1-ax)^{a^{-1}} \le e^{-1}a$, and hence $I_1\le e^{-1}a$. Finally, for $0< x <a^{-1}$, we have
\begin{align*}
|U_-(a,x)- \Phi_0^{\rm free}(x)|=e^{-x}-(1-ax)^{a^{-1}} =: u(x)>0.
\end{align*}
It is easy to verify that there is a unique point $0 < x_0 < a^{-1}$ such that $u'(x_0)=0$ and $u(x) \le u(x_0)$ by the intermediate theorem. Therefore $u(x)$ takes the supremum of on $(0,a^{-1})$ if and only if $u'(x_0)=0$, that is, $(1-ax_0)^{a^{-1}-1} =e^{-x_0}$. Hence
\begin{align*}
I_2&\le \sup_{0< x < a^{-1}} (e^{-x}-e^{-x}(1-ax))= a\sup_{0< x < a^{-1}} xe^{-x}\le a a^{-1} e^{-a^{-1}} \le e^{-1}a.
\end{align*}
Thus, the desired result is obtained.
\end{proof}

\begin{theorem}\label{thm:Gumbel}
Let $F\in \calF_0$, $a_n$, $b_n$, $g$, $f$ be defined as above. Then $w_n$ converges uniformly to the free Gumbel density $\varphi_0^{\rm free}$ on $(0, \infty)$. More strictly, we obtain
\[
\sup_{x>0} |w_n(x)- \varphi_0^{\rm free}(x)| \le O(n^{-1} \lor g(b_n)),
\]
for sufficiently large $n$. 
\end{theorem}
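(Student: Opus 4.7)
My approach closely parallels that of Theorem \ref{thm:LUC:Frechet}. For $x > 0$, I first decompose
\[
|w_n(x) - \varphi_0^{\rm free}(x)| \le |A(x) - e^{-x}| + e^{-x}(1 - F(a_n x + b_n)),
\]
where $A(x) := na_n \phi'(a_n x + b_n)(-\log F(a_n x + b_n))$; the second term is at most $1 - e^{-1/n} = O(n^{-1})$ because $F(a_n x + b_n) \ge F(b_n) = e^{-1/n}$ for $x > 0$. Setting $\psi(x) := \phi(a_n x + b_n) - \log n$, so that $A(x) = \psi'(x) e^{-\psi(x)}$ and $n(-\log F(a_n x + b_n)) = e^{-\psi(x)}$, I further split
\[
|A(x) - e^{-x}| \le |e^{-\psi(x)} - e^{-x}| + |\psi'(x) - 1| \cdot e^{-\psi(x)}.
\]

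For the first summand, the relation $a_n = f(b_n)$ combined with $f(a_n x + b_n) = f(b_n) + \int_0^{a_n x} f'(b_n + s)\, ds$ and $|f'| = |h_0| \le g$ (nonincreasing) forces the sandwich
\[
(1 - xg(b_n))^{1/g(b_n)} \le e^{-\psi(x)} \le (1 + xg(b_n))^{-1/g(b_n)}, \qquad 0 < x < 1/g(b_n),
\]
with the upper bound valid for every $x > 0$. Rewriting these as $U_+(g(b_n), x) \le 1 - e^{-\psi(x)} \le U_-(g(b_n), x)$ and using $\Phi_0^{\rm free}(x) = 1 - e^{-x}$ on $[0, \infty)$, Lemma \ref{lem:U} delivers $|e^{-\psi(x)} - e^{-x}| \le e^{-1} g(b_n)$ uniformly in $x > 0$, the tail $x \ge 1/g(b_n)$ being absorbed by $e^{-x} \le e^{-1/g(b_n)} = o(g(b_n))$.

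For the second summand, write $\psi'(x) = 1/(1 + \delta_n(x))$ with $|\delta_n(x)| \le xg(b_n)$. If $xg(b_n) \le 1/2$, then $|\psi'(x) - 1| \le 2xg(b_n)$ and $e^{-\psi(x)} \le e^{-3x/4}$ (via $\log(1+y) \ge y - y^2/2$), whence $|\psi'(x) - 1|e^{-\psi(x)} \le 2xg(b_n) e^{-3x/4} = O(g(b_n))$ by boundedness of $xe^{-3x/4}$. The main technical obstacle is the tail $xg(b_n) > 1/2$, where the sandwich on $\psi'$ degenerates; to handle it I observe that $\psi''(x) = -(\psi'(x))^2 f'(a_n x + b_n)$ yields
\[
(\log A)'(x) = \frac{\psi''(x)}{\psi'(x)} - \psi'(x) = -\psi'(x)\bigl(1 + f'(a_n x + b_n)\bigr) < 0
\]
for $n$ so large that $|f'| \le g(b_n) < 1$, so $A$ is monotonically decreasing past this threshold. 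Evaluating $A$ at the boundary point $x_0 := 1/(2g(b_n))$ (where the earlier sandwich still applies) gives $A(x) \le A(x_0) \le 2(3/2)^{-1/g(b_n)} = o(g(b_n))$ throughout the tail, and similarly $e^{-x} \le e^{-1/(2g(b_n))} = o(g(b_n))$; hence $|\psi'(x) - 1|e^{-\psi(x)} \le A(x) + e^{-\psi(x)} = o(g(b_n))$ there. Assembling all estimates delivers the claimed $O(n^{-1} \lor g(b_n))$ bound.
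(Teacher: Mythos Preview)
Your proof is correct, and it follows a genuinely different path from the paper's.

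Both arguments begin with the same reduction to bounding $|A(x)-e^{-x}|$, where $A(x)=\psi'(x)e^{-\psi(x)}$ and $\psi'(x)=f(b_n)/f(a_nx+b_n)$. The paper treats this quantity in one stroke: it quotes Resnick \cite[(2.54)]{R} for the sandwich $U_+(g(b_n),x)\le 1+n\log F(a_nx+b_n)\le U_-(g(b_n),x)$, invokes de Haan--Resnick \cite[Lemma~2]{HR82} to control the ratio $f(b_n)/f(a_nx+b_n)$ by powers of $-n\log F$, multiplies the two estimates, and lets the auxiliary parameter $\epsilon\downarrow 0$ to obtain the clean sandwich $\Phi_0^{\rm free}-U_-\le A-e^{-x}\le \Phi_0^{\rm free}-U_+$; a single application of Lemma~\ref{lem:U} then finishes. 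You instead split $|A-e^{-x}|\le |e^{-\psi}-e^{-x}|+|\psi'-1|e^{-\psi}$: the first piece you bound via the \emph{same} $U_\pm$ sandwich, but derived by hand from $|f'|\le g$ and the identity $f(a_nx+b_n)=f(b_n)+\int_0^{a_nx}f'(b_n+s)\,ds$, rather than by citation; the second piece you dispatch by an elementary estimate on $\{xg(b_n)\le 1/2\}$ together with the monotonicity of $A$ (via $(\log A)'=-\psi'(1+f'(a_nx+b_n))<0$) on the remaining tail.

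What each approach buys: the paper's route is shorter and yields the sharper one-shot bound $I\le e^{-1}g(b_n)$, at the cost of importing two external lemmas and a somewhat implicit $\epsilon\to 0$ passage. Your route is fully self-contained---no appeal to \cite{R} or \cite{HR82}---and makes transparent exactly where the von~Mises hypothesis $|f'|\le g$ enters, at the price of the extra tail argument for $|\psi'-1|e^{-\psi}$ and a slightly larger (but still $O(g(b_n))$) final constant.
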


\begin{proof}
For all $x>0$ and for sufficiently large $n\in \N$ (such that infinitely many $b_n >0$), we have
\begin{align*}
|w_n(x)&-\varphi_0^{\rm free}(x)|\\
&\le |na_n\phi'(a_nx+b_n)(-\log F(a_nx+b_n))F(a_nx+b_n)-e^{-x}F(a_nx+b_n)|\\
&\hspace{4mm}+ |e^{-x}F(a_nx+b_n)-e^{-x}|\\
&\le |na_n\phi'(a_nx+b_n)(-\log F(a_nx+b_n))-e^{-x}| +(1-F(b_n))\\
&= \underbrace{\left| \frac{f(b_n)}{f(a_nx+b_n)}(-n\log F(a_nx+b_n))-e^{-x}\right|}_{=:I}+(1-e^{-\frac{1}{n}}).
\end{align*}

By \cite[(2.54)]{R}, for $x>0$, we have
\begin{align*}
U_+(g(b_n),x) \le 1+n\log F(a_nx+b_n) \le U_-(g(b_n),x).
\end{align*}
According to \cite[Lemma 2]{HR82}, for any $\epsilon>0$, there exists $n_0\in \N$ such that for $x>0$ and $n\ge n_0$, we get
$$
\frac{1}{1+\epsilon} \left( \frac{-\log F(b_n)}{-\log F(a_nx+b_n)}\right)^{-\epsilon}\le \frac{f(b_n)}{f(a_nx+b_n)} \le \frac{1}{1-\epsilon} \left( \frac{-\log F(b_n)}{-\log F(a_nx+b_n)}\right)^{\epsilon}.
$$
Hence, for any $x>0$, $n\ge n_0$,
\begin{align*}
\frac{1}{1+\epsilon} (1-U_-(g(b_n),x))^{1+\epsilon} \le  \frac{f(b_n)}{f(a_nx+b_n)} (-n \log F(a_nx+b_n)) \le \frac{1}{1-\epsilon} (1-U_+(g(b_n),x))^{1-\epsilon}.
\end{align*}
Finally, we get
\begin{align*}
\Phi_0^{\rm free}(x)-U_-(g(b_n),x)  \le \frac{f(b_n)}{f(a_nx+b_n)} (-n \log F(a_nx+b_n)) -e^{-x} \le \Phi_0^{\rm free}(x)-U_+(g(b_n),x),
\end{align*}
for sufficiently large $n$. By Lemma \ref{lem:U}, we get $I\le e^{-1}g(b_n)$, as desired.
\end{proof}

\begin{example} Since all $F$ are differentiable as below, we take $b_n$ such that $F(b_n)=e^{-\frac{1}{n}}$ and $a_n=f(b_n)$. Let $w_n$ be the density of the function $x\mapsto F^{\Box\hspace{-.55em}\lor n}(a_nx+b_n)$ as below.
\begin{enumerate}
\item ({\it Classical Gumbel distribution})
Suppose that $F=\Phi_0$. By definition of the functions $\phi$, $f$ and $h_0$, we get $\phi(x)=x$, $f(x)=1$ and $h_0(x)=f'(x)=0$. The choices of $a_n$ and $b_n$ imply that $a_n=1$ and $b_n=\log n$ for $n\in \N$, respectively. By Theorem \ref{thm:Gumbel},
\begin{align*}
\sup_{x>0} |w_n(x)-\varphi_0^{\rm free}(x)| \le O(n^{-1}). 
\end{align*}

\item Define $F(x)=\exp\{ -e^{-x^\alpha}\}$ for $\alpha>0$ with $\alpha\neq 1$. Then we get $\phi(x)=x^\alpha$, $f(x)=\alpha^{-1} x^{-\alpha+1}$ and $h_0(x)=(-1+\alpha^{-1}) x^{-\alpha}$. The choices of $a_n$ and $b_n$ lead to $a_n= \alpha^{-1} (\log n)^{-1+\frac{1}{\alpha}}$ and $b_n=(\log n)^{\frac{1}{\alpha}}$ for $n\in \N$, respectively. Define $g(x)=|h_0(x)|=|-1+\alpha^{-1}| x^{-\alpha}$. Then we get
\begin{align*}
\sup_{x>0}| w_n(x)-\varphi_0^{\rm free}(x)| \le O\left( \frac{1}{n} \lor \left|-1+\frac{1}{\alpha}\right| \frac{1}{\log n}\right) = O\left(\frac{1}{\log n}\right).
\end{align*}

\item ({\it Normal distribution})
Let $F$ be the standard normal distribution function and let us set $p(t):=\frac{1}{\sqrt{2\pi}} e^{-\frac{t^2}{2}}$ as the standard normal density.
We define $b_n$ such that $F(b_n)=e^{-\frac{1}{n}}$ and $a_n=\frac{F(b_n)}{nF'(b_n)}$. It is easy to see that $F'(x)=p(x)$ and $F''(x)=-xp(x)$, and therefore $h_0(x)=-\log F(x) ( 1+\frac{xF(x)}{p(x)}) -1$. Here we take $g(x) =1 +\log F(x) (1+ \frac{x}{p(x)}) $. Hence 
\begin{align*}
g(b_n) &= 1- \frac{1}{n} \left(1 + \frac{b_n}{p(b_n)}\right)=1- \frac{1}{n} - \frac{\sqrt{2\pi} b_n}{n} \exp\left(\frac{b_n^2}{2}\right).
\end{align*}
Since it must be $\frac{\sqrt{2\pi} b_n}{n} \exp\left(\frac{b_n^2}{2}\right) \to 1$ as $n\to \infty$, we get
$$
b_n= \sqrt{2\log n} - \frac{\log \log n+\log 4\pi}{2\sqrt{2\log n}},
$$
see e.g. \cite{Hall79}. Finally, we have $g(b_n)=O((\log n)^{-\frac{1}{2}})$, and therefore
$$
\sup_{x>0}| w_n(x)-\varphi_0^{\rm free}(x)| \le O\left( \frac{1}{n} \lor O\left(\frac{1}{\sqrt{\log n}}\right)\right) =O\left(\frac{1}{\sqrt{\log n}}\right),
$$
by Theorem \ref{thm:Gumbel}.
\end{enumerate}
\end{example}

\subsection*{Acknowledgement}

This work was started when Y.K. was affiliated with Department of Mathematics, Hokkaido University. Furthermore, Y.U. was supported by JSPS Grant-in-Aid for Young Scientists 22K13925.

\bibliographystyle{amsplain}

\end{document}